\newtheorem {theorem}{Theorem}[section]
\newtheorem {proposition}{Proposition}[section]
\newtheorem {lemma}{Lemma}[section]
\newtheorem {example}{Example}[section]
\newtheorem {definition}{Definition}[section]
\newtheorem {remark}{Remark}[section]
\title{On approximate Pareto solutions in nonsmooth interval-valued multiobjective optimization with data uncertainty in constraints}
\author{VU HONG QUAN$^{1}$}
\address{$^1$Faculty of Fundamental Sciences, Thai Nguyen University of Technology, Thai Nguyen, Vietnam}
\email{hongquan@tnut.edu.vn}
\author{DUONG THI VIET AN$^2$}
\address{$^2$Department of Mathematics and Informatics, Thai Nguyen University of Sciences, Thai Nguyen, Vietnam}
\email{andtv@tnus.edu.vn}
\author{NGUYEN VAN TUYEN$^{3,*}$}
\address{$^3$Department of Mathematics, Hanoi Pedagogical University 2, Xuan Hoa, Phuc Yen, Vinh Phuc, Vietnam}
\email{nguyenvantuyen83@hpu2.edu.vn; tuyensp2@yahoo.com}
\thanks{$^*$Corresponding author}
\date{\today}
\keywords{Approximate solutions, Fuzzy KKT conditions, Duality relations,  Interval-valued multiobjective optimization, Robust nonsmooth optimization.}
\subjclass{90C29, 90C46, 90C70,  49J52}
\begin{document}

\maketitle

\begin{abstract} 	This paper deals with  approximate Pareto solutions of a  nonsmooth interval-valued multiobjective optimization problem  with data uncertainty in constraints.  We first introduce some kinds of approximate  Pareto solutions for the robust counterpart \eqref{problem-R} of  the problem in question  by considering the lower-upper interval order relation including: (almost, almost regular) $\mathcal{E}$-Pareto solution and (almost, almost regular) $\mathcal{E}$-quasi Pareto solution. By using a scalar penalty function, we obtain a result on the existence of an almost regular $\mathcal{E}$-Pareto solution of \eqref{problem-R} that satisfies the Karush--Kuhn--Tucker necessary optimality condition up to a given precision. We then establish sufficient conditions and Wolfe-type $\mathcal{E}$-duality relations  for approximate  Pareto solutions of \eqref{problem-R} under the assumption of generalized convexity.  In addition, we present a dual multiobjective problem to the primal one via the $\mathcal{E}$-interval-valued vector Lagrangian function  and examine duality relations. 
\end{abstract}

\section{Introduction}

Due to prediction errors or lack of information, the data of a real-world optimization
problem are often uncertain, i.e., they are not known precisely when the problem is solved, see~\cite{Ben-Nemirovski-98,Ben-Nemirovski-08,Ben-Nemirovski-09}. Robust optimization has emerged as a remarkable
deterministic framework for studying mathematical programming problems with uncertain data. Theoretical and applied aspects in the area of robust optimization have been studied intensively by many researchers; see, e.g.,~\cite{Ben-Nemirovski-09,Chen-19,Chuong-16,Fakhar-19,Gutierrez,Kobis-14,Kobis-15,Kobis-17,Rahimi,Sun-Teo-Zeng-Liu}  and the references therein.

\medskip

Approximate efficient solutions of multiobjective optimization problems can be viewed as feasible points whose objective values display a prescribed error $\mathcal{E}$ in
the optimal values of the vector objective. This concept has been widely discussed
in the works \cite{Chuong-Kim-16,Fakhar-19,Jiao-Liguo-21,Loridan-82,Loridan-84}. Optimality conditions and duality theories of $\mathcal{E}$-efficient solutions and
$\mathcal{E}$-quasi-efficient ones for nonconvex programming problems 
have been studied in \cite{Chuong-Kim-16,Kim-Son-18,Liu-91,Saadati-Oveisiha,Son-Tuyen-Wen-19,Son-09,Son-13,Son-24,TXS-20,Tuyen-2022}.

\medskip

An interval-valued optimization problem is a deterministic optimization model for dealing with uncertain (incomplete) data. Interval-valued optimization problems, where the coefficients of objective and constraint functions are characterized by closed intervals, offer a simplified framework for addressing uncertainty compared to stochastic or fuzzy optimization approaches. That is the main reason why the interval-valued optimization problems have attracted the attention of many researchers, see, e.g., \cite{Chalco-Cano et.al.-13,Hung-Tuan-Tuyen,Hung-Tuyen,Ishibuchi-Tanaka-90,Jennane,Kumar,Moore-1979,Qian,Singh-Dar-15,Singh-Dar-Kim-19,Su-2020,Tung-2019,Tuyen-2021,Wu-07,Wu-09,Wu-09-c} and the references therein.

\medskip
In this paper, we study approximate optimality conditions as well as approximate duality relations of a nonsmooth interval-valued multiobjective optimization problem  with data uncertainty in constraints~\eqref{problem} via its robust counter part~\eqref{problem-R}. To the best of our knowledge, there is no work has been published dealing with approximate optimality conditions
and duality relations in the above-defined optimization problems.  Namely, in this paper we establish necessary/sufficient optimality conditions for almost $\mathcal{E}$ ($\mathcal{E}$-quasi)-Pareto 
solutions of problem~\eqref{problem-R}. These optimality conditions are presented in terms of multipliers and limiting subdifferentials of the related functions. Along with optimality conditions, we propose a dual robust multiobjective optimization problem to problem~\eqref{problem-R}, and
examine $\mathcal{E}$-duality and $\mathcal{E}$-converse like duality relations between them under assumptions of  generalized convexity. We also consider an $\mathcal{E}$-interval-valued vector Lagrangian function and a quasi-$\mathcal{E}$ Pareto saddle point.
In addition, examples are also provided for analyzing and illustrating the obtained results. 

\medskip

Our paper is organized as follows. Section~\ref{Preliminaries} provides some definitions and notation. In Section~\ref{Optimiality-conditions},
we present  optimality conditions for approximate Pareto solutions of  \eqref{problem-R}. Wolfe-type $\mathcal{E}$-duality relations for approximate  Pareto solutions of \eqref{problem-R} under the assumption of generalized convexity are discussed in Section~\ref{Duality-Relations}. In the Section~\ref{section5}, we study a dual multiobjective problem to the primal one via the $\mathcal{E}$-interval-valued vector Lagrangian function  and examine duality relations. Some concluding remarks are
given in Section~\ref{Conclusions}.

\section{PRELIMINARIES} \label{Preliminaries}
We use the following notation and terminology. Fix $n \in {\mathbb{N}}:=\{1, 2, \ldots\}$. The space $\mathbb{R}^n$ is equipped with the usual scalar product and  Euclidean norm. The symbol $B(x,\rho)$ stands for the open ball centered at $x\in \mathbb{R}^n$ with the radius $\rho>0$.
The closed unit ball of $\mathbb{R}^n$ is denoted by $\mathbb{B}$.  We denote the nonnegative orthant in $\mathbb{R}^n$ by  $\mathbb{R}^n_+$.  The topological closure of $S \subset \mathbb{R}^n$ is denoted  by  $\mathrm{cl}\,{S}$.  As usual, the \textit{polar cone} of $S$ is the set
$$S^\circ:= \{x^* \in \mathbb{R}^n : \langle x^*, x \rangle \le 0, \forall x\in S\}.$$ 

\begin{definition}[{see~\cite[Chapter 1]{mor06}}]{\rm  Given $\bar x\in  \mbox{cl}\,S$. The set
		\begin{equation*}
		N(\bar x; S):=\{x^*\in \mathbb{R}^n:\exists
		x^k\stackrel{S}\longrightarrow \bar x, \varepsilon_k\to 0^+, x^*_k\to x^*,
		x^*_k\in {\widehat N_{\varepsilon
				_k}}(x^k; S),\ \ \forall k \in\mathbb{N}\},
		\end{equation*}
		is called the {\em limiting/Mordukhovich normal cone} to $S$ at $\bar x$, where
		\begin{equation*}
		\widehat N_\varepsilon  (x; S):= \bigg\{ {x^*  \in {\mathbb{R}^n} \;:\;\limsup_{u\overset{S} \rightarrow x}
			\frac{{\langle x^* , u - x\rangle }}{{\parallel u - x\parallel }} \leq \varepsilon } \bigg\}
		\end{equation*}
		is the set of  {\em $\varepsilon$-normals} to $S$  at $x$ and  $u\xrightarrow {{S}} x$ means that $u \rightarrow x$ and $u \in S$.
	}
\end{definition}
Let  $\varphi \colon \mathbb{R}^n \to  \overline{\mathbb{R}}$ be an extended-real-valued function, where $\overline{\mathbb{R}}:=[-\infty, +\infty]$. The {\em  epigraph}  and  {\em domain} of $\varphi$ are denoted, respectively, by
\begin{align*}
\mbox{epi }\varphi&:=\{(x, \alpha)\in\mathbb{R}^n\times\mathbb{R} \,:\,  \varphi(x)\leq \alpha\},
\\
\mbox{dom }\varphi &:= \{x\in \mathbb{R}^n \,:\,  \varphi(x) <+\infty \}.
\end{align*}
We say that $\varphi$ is \textit{proper} if
$\varphi(x) > -\infty$ for all $x\in \mathbb{R}^n$ and its domain is nonempty.
\begin{definition}[{see~\cite[Definition 1.77]{mor06}}]{\rm   Consider a function $\varphi: \mathbb{R}^n \to \overline{\mathbb{R}}$ and a point $\bar x \in \mathbb{R}^n$ with $|\varphi (\bar x)| < \infty$.
 The set
		\begin{align*}
		\partial \varphi (\bar x):=\{x^*\in \mathbb{R}^n \,:\, (x^*, -1)\in N((\bar x, \varphi (\bar x)); \mbox{epi }\varphi )\},
		\end{align*}
		is called the {\it limiting/Mordukhovich subdifferential}  of $\varphi$ at $\bar x$. If $|\varphi (\bar x)| = \infty$, then we put $\partial \varphi (\bar x):=\emptyset$.
	}
\end{definition}
The concept of limiting normal cone to sets
and limiting subdifferentials of functions can be linked across. Namely, $ N(\bar x; S)=\partial \delta_S(\bar x),$ where $\delta_S : \mathbb{R}^n \to \mathbb{R} \cup \{+\infty\}$ stands for the indicator function
associated with the set $S$, i.e., the function that takes the value 0 on $\mathbb{R}^n$ and the value $+\infty$ on $\mathbb{R}^n \setminus S.$
\medskip

We now summarize some properties of  the limiting subdifferential that will be used in the next section.

Recall that $\varphi$ is \textit{lower semicontinuous} (l.s.c.) at a point $\bar x$ with $|\varphi(\bar x)| < \infty$ if
$$ \varphi(\bar x) \le \liminf\limits_{x\to \bar x} \varphi(x).$$
One says that $\varphi$ is l.s.c. around $\bar x$ when it is l.s.c. at any point of some neighborhood of $\bar x$.  The \textit{upper semicontinuity} (u.s.c.) of $\varphi$ is defined symmetrically from the lower semicontinuity of $-\varphi$. The function $\varphi$ is called locally Lipschitzian around $\bar x$ there is a
neighborhood $U$ of $\bar x$ and a constant $\ell 
\ge 0$ such that
$ \| \varphi (x)-\varphi(u)\|\le  \ell \|x-u\|$ for all $x,u \in U.$   

\medskip

The nonsmooth version of Fermat’s rule is recalled in the next proposition. 
\begin{proposition}[{\rm see \cite[Proposition 1.114]{mor06}}] \label{Fermat-rule} Let   $\varphi\colon\mathbb{R}^n\to\overline{\mathbb{R}}$  be finite at $\bar x$. If $\bar x$ is a local minimizer of  $\varphi$, then $ 0\in\partial\varphi(\bar x).$
\end{proposition}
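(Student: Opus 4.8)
The plan is to read off the inclusion $0\in\partial\varphi(\bar x)$ directly from the two definitions recalled above, with no appeal to subdifferential calculus: I would first convert the local minimality of $\bar x$ into the statement that $(0,-1)\in\widehat N_{0}\big((\bar x,\varphi(\bar x));\mbox{epi}\,\varphi\big)$, and then observe that $0$-normals are trivially limiting normals. For the first step, use that $\bar x$ is a local minimizer to fix $\rho>0$ with $\varphi(x)\ge\varphi(\bar x)$ for all $x\in B(\bar x,\rho)$; then for every $(u,\alpha)\in\mbox{epi}\,\varphi$ with $u\in B(\bar x,\rho)$ one has $\alpha\ge\varphi(u)\ge\varphi(\bar x)$, whence
$$\big\langle(0,-1),(u,\alpha)-(\bar x,\varphi(\bar x))\big\rangle=-(\alpha-\varphi(\bar x))\le 0.$$
Dividing by $\|(u,\alpha)-(\bar x,\varphi(\bar x))\|$ and taking the limit superior as $(u,\alpha)\to(\bar x,\varphi(\bar x))$ inside $\mbox{epi}\,\varphi$ keeps the quotient nonpositive, so
$$\limsup_{\substack{(u,\alpha)\to(\bar x,\varphi(\bar x))\\ (u,\alpha)\in\mbox{epi}\,\varphi}}\frac{\big\langle(0,-1),(u,\alpha)-(\bar x,\varphi(\bar x))\big\rangle}{\|(u,\alpha)-(\bar x,\varphi(\bar x))\|}\le 0,$$
which is exactly $(0,-1)\in\widehat N_{0}\big((\bar x,\varphi(\bar x));\mbox{epi}\,\varphi\big)$, and in particular $(0,-1)\in\widehat N_{\varepsilon}\big((\bar x,\varphi(\bar x));\mbox{epi}\,\varphi\big)$ for every $\varepsilon\ge 0$.

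For the second step, I would insert this into the definition of the limiting normal cone with the constant data $(x^k,\alpha_k)\equiv(\bar x,\varphi(\bar x))$, $\varepsilon_k=1/k\to 0^+$, and $(x^*_k)\equiv(0,-1)$; since $(0,-1)\in\widehat N_{\varepsilon_k}\big((\bar x,\varphi(\bar x));\mbox{epi}\,\varphi\big)$ for every $k$, these data are admissible and give $(0,-1)\in N\big((\bar x,\varphi(\bar x));\mbox{epi}\,\varphi\big)$. By the definition of the limiting subdifferential — applicable precisely because $|\varphi(\bar x)|<\infty$, so that $(\bar x,\varphi(\bar x))\in\mbox{epi}\,\varphi$ — this is exactly $0\in\partial\varphi(\bar x)$.

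I do not anticipate a genuine obstacle: all of the content sits in the first step, where the only point requiring care is that the inequality $\alpha\ge\varphi(\bar x)$ must hold for all epigraphical points $(u,\alpha)$ near $(\bar x,\varphi(\bar x))$ — which is secured by restricting to $u\in B(\bar x,\rho)$ and is precisely the content of local minimality. The finiteness assumption $|\varphi(\bar x)|<\infty$ is used only to make the passage through $\mbox{epi}\,\varphi$ legitimate.
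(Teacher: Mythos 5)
Your argument is correct and complete: the paper offers no proof of this proposition (it is quoted from Mordukhovich's book), and your derivation is the standard one, reading the conclusion directly off the paper's geometric definition of $\partial\varphi$ via the normal cone to $\mathrm{epi}\,\varphi$. The two points that need care --- that local minimality forces $\alpha\ge\varphi(\bar x)$ for all epigraphical points $(u,\alpha)$ with $u$ near $\bar x$, and that $|\varphi(\bar x)|<\infty$ is what puts $(\bar x,\varphi(\bar x))$ into $\mathrm{epi}\,\varphi$ so the normal cone there is defined --- are both handled explicitly, and passing from the $0$-normal to the limiting normal cone by a constant sequence is legitimate since $\widehat N_0\subset\widehat N_{\varepsilon_k}$ for every $\varepsilon_k>0$.
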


The sum rule for the limiting subdifferential is followed~\cite[Theorem 3.36]{mor06}.
\begin{proposition}\label{sum-rule} Let $\varphi_l\colon\mathbb{R}^n\to\overline{\mathbb{R}}$, $l=1, \ldots, p$, $p\geq 2$, be l.s.c. around $\bar x$ and let all but one of these
	functions be locally Lipschitz around $\bar x$. Then we have the following inclusion
	\begin{equation*}
	\partial (\varphi_1+\ldots+\varphi_p) (\bar x)\subset \partial  \varphi_1 (\bar x) +\ldots+\partial \varphi_p (\bar x).
	\end{equation*}
\end{proposition}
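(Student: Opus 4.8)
The plan is to follow the classical two-stage route for this basic calculus rule: first reduce, by induction on $p$, to the two-function case, and then prove that case through an approximate (fuzzy) sum rule for Fr\'echet subgradients followed by a limiting passage that exploits the local Lipschitz hypothesis.

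For the reduction, assume the inclusion for sums of $p-1$ functions and let $\varphi_p$ be the (possibly) non-Lipschitzian summand, so that $\psi:=\varphi_1+\dots+\varphi_{p-1}$ is locally Lipschitz around $\bar x$ and l.s.c. there. Applying the two-function rule to $\psi+\varphi_p$ gives $\partial(\varphi_1+\dots+\varphi_p)(\bar x)\subset\partial\psi(\bar x)+\partial\varphi_p(\bar x)$, and the inductive hypothesis applied to $\psi$ (now a sum of locally Lipschitz functions) gives $\partial\psi(\bar x)\subset\partial\varphi_1(\bar x)+\dots+\partial\varphi_{p-1}(\bar x)$; combining the two finishes the induction. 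Hence it suffices to treat $p=2$ with $\varphi_1$ l.s.c. around $\bar x$ and $\varphi_2$ locally Lipschitz around $\bar x$.

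For the two-function case, fix $x^*\in\partial(\varphi_1+\varphi_2)(\bar x)$. By the definition of the limiting subdifferential via $\varepsilon$-normals to the epigraph, there are sequences $x^k\to\bar x$ with $(\varphi_1+\varphi_2)(x^k)\to(\varphi_1+\varphi_2)(\bar x)$, scalars $\varepsilon_k\to 0^+$, and vectors $x^*_k\to x^*$ with $x^*_k$ an $\varepsilon_k$-Fr\'echet subgradient of $\varphi_1+\varphi_2$ at $x^k$. The key tool is a \emph{fuzzy sum rule}: for tolerances $\eta_k\to 0^+$ one can produce points $u^k_i\to\bar x$ with $\varphi_i(u^k_i)\to\varphi_i(\bar x)$ and Fr\'echet subgradients $x^*_{i,k}\in\widehat\partial\varphi_i(u^k_i)$ for $i=1,2$ satisfying $\|x^*_k-x^*_{1,k}-x^*_{2,k}\|\le\eta_k$. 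This fuzzy rule follows from a penalization argument: minimize over a small ball the separated function $(x,y)\mapsto\varphi_1(x)+\varphi_2(y)-\langle x^*_k,x\rangle+\tfrac{1}{2\lambda}\|x-y\|^2$ (with $\lambda$ small), apply Ekeland's variational principle to obtain a near-minimizer, and extract Fr\'echet subgradients of the two pieces at that near-minimizer from the smooth variational description of $\widehat\partial$ --- this step is exactly where the extremal principle (equivalently, Ekeland's principle) is invoked.

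The limiting passage is then straightforward and is where ``all but one locally Lipschitz'' does its work: if $\varphi_2$ is $\ell$-Lipschitz near $\bar x$, then $\|x^*_{2,k}\|\le\ell$ once $u^k_2$ is close enough to $\bar x$, so $(x^*_{2,k})$ is bounded and, along a subsequence, $x^*_{2,k}\to x^*_2$; consequently $x^*_{1,k}=x^*_k-x^*_{2,k}+o(1)\to x^*-x^*_2=:x^*_1$. Since $u^k_1\to\bar x$, $\varphi_1(u^k_1)\to\varphi_1(\bar x)$, and $x^*_{1,k}\to x^*_1$ with $x^*_{1,k}\in\widehat\partial\varphi_1(u^k_1)$, the definition of $\partial$ yields $x^*_1\in\partial\varphi_1(\bar x)$, and likewise $x^*_2\in\partial\varphi_2(\bar x)$; hence $x^*=x^*_1+x^*_2\in\partial\varphi_1(\bar x)+\partial\varphi_2(\bar x)$. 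I expect the main obstacle to be the fuzzy sum rule itself, together with the bookkeeping that lets it interact correctly with the $\varepsilon_k$-enlargements in the definition of the limiting subdifferential; the reduction and the final compactness-and-limit argument are routine, and the Lipschitz assumption is essential only to keep the constructed subgradients from escaping to infinity.
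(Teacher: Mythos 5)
Your outline is correct and is essentially the standard argument: the paper does not prove this proposition but imports it verbatim from \cite[Theorem 3.36]{mor06}, and your route (induction to the two-function case, the fuzzy sum rule for Fr\'echet subgradients via Ekeland's variational principle, then a limiting passage in which the local Lipschitz bound keeps all but one subgradient sequence bounded) is precisely the proof given in that reference. The only caveat is that the fuzzy sum rule, which you correctly identify as the real content, is sketched rather than executed, but the mechanism you describe for it is the right one.
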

The following proposition contains results for computing limiting subdifferentials of maximum functions.
\begin{proposition}[{see \cite[Theorem 3.46]{mor06}}]\label{max-rule}
	Let $\varphi_l\colon\mathbb{R}^n\to\overline{\mathbb{R}}$, $l=1, \ldots, p$,  be  locally Lipschitz around $\bar x$. Then the function
	$ \phi(\cdot):=\max\{\varphi_l(\cdot):l=1, \ldots, p\}$
	is also locally Lipschitz around $\bar x$ and one has
	\begin{equation*}
	\partial \phi(\bar x)\subset \bigcup\bigg\{\partial\bigg(\sum_{l\in I(\bar x)}\lambda_l\varphi_l\bigg)(\bar x)\;:\; (\lambda_1, \ldots, \lambda_p)\in\Lambda(\bar x)\bigg\},
	\end{equation*}
	where $I(\bar x)=\{l\in \{1,...,p\} : \varphi_l (\bar x)=\phi(\bar x) \}$ and 
    $$\Lambda(\bar x):=\big\{(\lambda_1, \ldots, \lambda_p)\;:\; \lambda_l\geq 0, \,\sum_{l=1}^{p}\lambda_l=1, \, \lambda_l[\varphi_l(\bar x)-\phi(\bar x)]=0\big\}.$$
\end{proposition}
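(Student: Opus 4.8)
The plan is to reduce the assertion to a composition of the finite maximum function on $\mathbb{R}^p$ with the vector map built from the $\varphi_l$'s, and then combine the nonsmooth chain rule with the elementary convex calculus of the maximum.

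First I would dispose of the Lipschitz claim. If each $\varphi_l$ is Lipschitz with modulus $\ell_l$ on a common neighbourhood $U$ of $\bar x$, then for $x,u\in U$ one has $|\phi(x)-\phi(u)|\le \max_l|\varphi_l(x)-\varphi_l(u)|\le(\max_l\ell_l)\|x-u\|$, so $\phi$ is locally Lipschitz around $\bar x$; in particular $|\phi(\bar x)|<\infty$ and $\partial\phi(\bar x)$ is a nonempty compact set, so every object appearing below is well defined.

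Next I would write $\phi=\theta\circ F$, where $F:=(\varphi_1,\dots,\varphi_p)\colon\mathbb{R}^n\to\mathbb{R}^p$ is Lipschitz around $\bar x$ and $\theta\colon\mathbb{R}^p\to\mathbb{R}$, $\theta(y):=\max\{y_1,\dots,y_p\}$, is convex and globally Lipschitz. Put $\bar y:=F(\bar x)$, so that $\theta(\bar y)=\phi(\bar x)$ and $I(\bar x)=\{l:\bar y_l=\theta(\bar y)\}$. Since $\theta$ is convex, its limiting subdifferential coincides with the subdifferential of convex analysis, and a direct computation of the latter gives
$$\partial\theta(\bar y)=\Big\{\lambda=(\lambda_1,\dots,\lambda_p):\lambda_l\ge 0,\ \textstyle\sum_{l=1}^{p}\lambda_l=1,\ \lambda_l=0\ \text{for}\ l\notin I(\bar x)\Big\}.$$
Because $\varphi_l(\bar x)-\phi(\bar x)\le 0$ for every $l$, the requirement $\lambda_l=0$ for $l\notin I(\bar x)$ is the same as $\lambda_l[\varphi_l(\bar x)-\phi(\bar x)]=0$; hence $\partial\theta(\bar y)=\Lambda(\bar x)$ in the notation of the statement.

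Finally I would invoke the scalarized chain rule for the limiting subdifferential: since $F$ is Lipschitz around $\bar x$ and $\theta$ is Lipschitz (so the qualification condition required by the chain rule holds automatically, the singular subdifferential of $\theta$ being trivial), one obtains
$$\partial\phi(\bar x)=\partial(\theta\circ F)(\bar x)\subset\bigcup\big\{\partial\langle\lambda,F\rangle(\bar x):\lambda\in\partial\theta(\bar y)\big\}.$$
For $\lambda\in\Lambda(\bar x)$ we have $\lambda_l\ge 0$ and $\lambda_l=0$ whenever $l\notin I(\bar x)$, whence $\langle\lambda,F\rangle=\sum_{l=1}^{p}\lambda_l\varphi_l=\sum_{l\in I(\bar x)}\lambda_l\varphi_l$; substituting this into the displayed inclusion yields precisely the claimed formula.

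The main obstacle is the chain-rule step. Either one cites a Mordukhovich-type chain rule for limiting subdifferentials of compositions of Lipschitz maps, in which case the only thing to check is that the Lipschitz continuity of $\theta$ makes the qualification condition vacuous; or one proves it, which ultimately rests on the fuzzy sum rule and the extremal principle underpinning the limiting calculus. A more self-contained alternative is induction on $p$ using only Propositions~\ref{Fermat-rule} and \ref{sum-rule}: when the active values are distinct, $\phi$ coincides near $\bar x$ with a single $\varphi_l$ and the formula is immediate; when there is a tie, one uses $\max\{a,b\}=\tfrac12(a+b+|a-b|)$ together with the one-dimensional subdifferential of $|\cdot|$, trading the multidimensional chain rule for a careful treatment of $|\varphi_i-\varphi_j|$ at its zeros.
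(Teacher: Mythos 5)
The paper offers no proof of Proposition~\ref{max-rule}: it is quoted as a known result with a pointer to \cite[Theorem~3.46]{mor06}, and your argument---writing $\phi=\theta\circ F$ with $\theta$ the convex coordinate-maximum, identifying $\partial\theta(F(\bar x))$ with $\Lambda(\bar x)$ via the complementarity condition, and invoking the scalarized chain rule for Lipschitz compositions---is essentially the proof given in that reference. It is correct as written, with the only nontrivial ingredient (the chain rule, whose qualification condition is vacuous since $\partial^\infty\theta$ is trivial for Lipschitz $\theta$) properly flagged and attributed.
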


Next we recall some definitions and facts in interval analysis.  Let $\mathcal{K}_c$ be the class of all closed and bounded intervals in $\mathbb{R}$, i.e., 
$$\mathcal{K}_c=\{[a^L, a^U]\,:\, a^L, a^U\in\mathbb{R}, a^L\leq a^U\},$$ where $a^L$  and $a^U$  means the lower and upper bounds of $A$, respectively.
Let $A=[a^L, a^U]$ and $B=[b^L, b^U]$ be two intervals in $\mathcal{K}_c$. Then, by definition, we have
\begin{enumerate}[(i)]
	\item $A+B:=\{a+b\,:\, a\in A, b\in B\}=[a^L+b^L, a^U+b^U]$;
	\item $A-B:=\{a-b\,:\, a\in A, b\in B\}=[a^L-b^U, a^U-b^L]$;
	\item For each $k\in\mathbb{R}$,
	\begin{equation*}kA:=\{ka\,:\, a\in A\}=
	\begin{cases}
	[ka^L, ka^U]\ \ \text{if}\ \ k\geq 0,
	\\
	[ka^U, ka^L]\ \ \text{if}\ \ k < 0, 
	\end{cases}
	\end{equation*}
\end{enumerate}
see, e.g., \cite{Alefeld,Moore-1966,Moore-1979}  for more details.
It should be note that if $a^L=a^U$, then $A=[a, a]=a$ is a real number.

The order relation by the lower and upper
bounds of an interval is defined in Definition~\ref{dn2.3}.
\begin{definition}[{see \cite{Kulisch-81,Ishibuchi-Tanaka-90,Wu-07}}]\label{dn2.3}{\rm 
		Let $A=[a^L, a^U]$ and $B=[b^L, b^U]$ be two intervals in $\mathcal{K}_c$. One says that:
		\begin{enumerate}[(i)]
			\item $A\leq_{LU} B$  if and only if $a^L\leq b^L$ and $a^U\leq b^U$.
			
			\item  $A<_{LU} B$ if and only if $A\leq_{LU} B$ and $A\neq B$, or, equivalently, $A<_{LU} B$ if
			\\
			$ 
			\begin{cases}
			a^L<b^L
			\\
			a^U\leq b^U,
			\end{cases}
			$ 
			or \ \ \ \
			$ 
			\begin{cases}
			a^L\leq b^L
			\\
			a^U< b^U,
			\end{cases}
			$ 
			or \ \ \ \
			$ 
			\begin{cases}
			a^L< b^L
			\\
			a^U< b^U.
			\end{cases}
			$ 
			\item $A<^s_{LU} B$ if and only if $a^L<b^L$ and $a^U<b^U$.
		\end{enumerate}
We also define $A\leq_{LU} B$ (resp., $A<_{LU} B$, $A<^s_{LU} B$) if and only if 	$B\geq_{LU} A$ (resp., $B>_{LU} A$, $B>^s_{LU} A$).}
\end{definition}

%===============================================================================
%Optimality conditions==========================================================
%===============================================================================
\section{Approximate optimality conditions}\label{Optimiality-conditions}
For each $j\in J:=\{1, \ldots, p\}$, let $\mathcal{V}_j$ be a nonempty compact subset of $\mathbb{R}^{n_j}$, $n_j\in\mathbb{N}$.  In this paper, we are interested in studying approximate solutions of the following   multiobjective problem with multiple interval-valued objective functions:  
\begin{align}\label{problem}\tag{UMP}
LU-&\mathrm{Min}\;f(x):=\left(f_1(x), \ldots, f_m(x)\right) 
\\
&\text{subject to}\ \ x\in \{x\in S\,:\, g_j(x,v_j)\leq 0, j=1, \ldots p\}, \notag
\end{align}
where $f_i\colon \mathbb{R}^n\to \mathcal{K}_c$, $i\in I:=\{1, \ldots, m\}$, are interval-valued functions and defined respectively by $f_i(x)=[f_i^L(x), f_i^U(x)]$ with $f_i^L,$ $f_i^U\colon\mathbb{R}^n\to \mathbb{R}$  are locally Lipschitz functions satisfying $f_i^L(x)~\leq~f_i^U(x)$  for all $x\in S$ and $i\in I$; $v_j \in \mathcal{V}_j$, $j\in J$, are  uncertain parameters, 
$g_j(\cdot,\cdot)\colon\mathbb{R}^n\times \mathbb{R}^{n_j}\to \mathbb{R}$, $j\in J$, are given functions, and $S$ is a nonempty and closed subset of $\mathbb{R}^n$. 

In what follows, for each $j\in J$, we assume that the function $g_j(\cdot,\cdot)$ satisfies the following conditions:
\begin{itemize}
	\item[($\mathcal{C}_1$)] For a given point $\bar x\in\mathbb{R}^n$, there is $\delta_j^{\bar x}$ such that the function $v_j\in\mathcal{V}_j\mapsto g_j(x, v_j)$ is u.s.c. for each $x\in B(\bar x, \delta_j^{\bar x})$, and the function $g_j(\cdot, v_j), v_j\in\mathcal{V}_j$, are Lipschitz with the same rank $K_j$ on $B(\bar x, \delta_j^{\bar x})$, i.e.,
	\begin{equation*}
		|g_j(x_1, v_j)-g_j(x_2, v_j)|\leq K_j\|x_1-x_2\| \ \ \forall x_1, x_2\in B(\bar x, \delta_j^{\bar x}), \forall v_j\in\mathcal{V}_j. 
	\end{equation*} 
	\item[($\mathcal{C}_2$)]  The multifunction $(x, v_j)\in B(\bar x, \delta_j^{\bar x})\times \mathcal{V}_j\rightrightarrows \partial_x g_j(x, v_j)$ is closed at $(\bar x, \bar v_j)$ for each $\bar v_j\in \mathcal{V}_j(\bar x)$, where the symbol $\partial_x$ stands for the limiting subdifferential operation with respect to variable $x$.
\end{itemize} 

For studying problem \eqref{problem}, one usually associates with it the so-called robust counterpart:
\begin{align}\label{problem-R}\tag{RMP}
	LU-&\mathrm{Min}\;f(x):=\left(f_1(x), \ldots, f_m(x)\right) 
	\\
	&\text{subject to}\ \ x\in \Omega:= \{x\in S\,:\, g_j(x,v_j)\leq 0, \forall v_j\in\mathcal{V}_j,\ \ \forall j\in J\}. \notag
\end{align}

We now introduce approximate solutions of \eqref{problem-R} with respect to $LU$ interval order relation.  Let $\epsilon_i^L$,  $\epsilon_i^U$, $i\in I$, be fixed  real numbers satisfying $0\leq \epsilon_i^L\leq \epsilon_i^U$ for all $i\in I$  and $\sum\limits_{i\in I}(\epsilon_i^L+\epsilon_i^U)>0$. Put $\theta:=\sum\limits_{i\in I}(\epsilon_i^L+\epsilon_i^U)$ and   $\mathcal{E}:=(\mathcal{E}_1, \ldots, \mathcal{E}_m)$, where $\mathcal{E}_i:=[\epsilon_i^L, \epsilon_i^U]$. For each $j\in J$, put $g_j(x):=\max\limits_{v_j\in\mathcal{V}_j} g_j(x, v_j)$. Then, one has
\begin{equation*}
	\Omega=\{x\in S\,:\, g_j(x)\leq 0,\ \ j\in J\}.
\end{equation*}
The $\mathcal{E}$-feasible set $\Omega_{\mathcal{E}}$ is defined by
\begin{equation*}
\Omega_{\mathcal{E}}:=\{x\in S\,:\, g_j(x)\leq \sqrt{\theta},\ \ j\in J\}.
\end{equation*}
Clearly, $\Omega\subset \Omega_{\mathcal{E}}$.

For a given point $\bar x\in \mathbb{R}^n$, the active indices in $\mathcal{V}_j$ at $\bar x$ is denoted by  $\mathcal{V}_j(\bar x)$ and defined as 
\begin{equation*}
\mathcal{V}_j(\bar x):=\{v_j\in \mathcal{V}_j\,:\, g_j(\bar x, v_j)=g_j(\bar x)\}.
\end{equation*}

The following concepts of solutions can be found in the literature; see, e.g.,~\cite{Loridan-82,Loridan-84}.
\begin{definition}\label{Defi-solution-n1}{\rm
		Let $\bar x\in \Omega$. We say that:
		\begin{enumerate}[(i)]
			\item $\bar x$ is an {\em $\mathcal{E}$-Pareto solution} of \eqref{problem-R}, denoted by $\bar x\in \mathcal{E}$-$\mathcal{S}\eqref{problem-R}$, if there is no $x\in \Omega$ such that
			\begin{equation}\label{equa-1-n}
				\begin{cases}
					f_i(x)\leq_{LU} f_i(\bar x) -\mathcal{E}_i,\ \ &\forall i\in I,
					\\
					f_k(x)<_{LU} f_k(\bar x) -\mathcal{E}_k,\ \ &\text{for at least one}\ \ k\in I.
				\end{cases} 
			\end{equation}
			\item $\bar x$ is an {\em  $\mathcal{E}$-quasi Pareto solution} of \eqref{problem-R}, denoted by $\bar x\in \mathcal{E}$-$\mathcal{S}^q\eqref{problem-R}$, if there is no $x\in \Omega$ such that
			\begin{equation}\label{equa-5-n}
			\begin{cases}
			f_i(x)\leq_{LU} f_i(\bar x) -\frac{\mathcal{E}_i}{\sqrt{\theta}}\|x-\bar x\|,\ \ &\forall i\in I,
			\\
			f_k(x)<_{LU} f_k(\bar x) -\frac{\mathcal{E}_k}{\sqrt{\theta}}\|x-\bar x\|,\ \ &\text{for at least one}\ \ k\in I.
			\end{cases} 
			\end{equation}
		\end{enumerate}
	}
\end{definition}

We now introduce the concepts of  almost approximate Pareto solutions to \eqref{problem-R}. These concepts are inspired from \cite{Loridan-82}.
\begin{definition}\label{Definition-2}
\rm 
Let $\bar x\in \Omega_{\mathcal{E}}$.  We say that:
\begin{enumerate} [\rm(i)]
	\item  $\bar x$ is an {\em almost  $\mathcal{E}$-Pareto solution} (respectively, {\em almost  $\mathcal{E}$-quasi Pareto solution}) of \eqref{problem-R}, denoted by $\bar x\in \mathcal{E}$-$\mathcal{S}_{a}\eqref{problem-R}$ (respectively, $\bar x\in \mathcal{E}$-$\mathcal{S}^q_{a}\eqref{problem-R}$) if there is no $x\in \Omega$  satisfying \eqref{equa-1-n} (respectively,  \eqref{equa-5-n}).  
	\item $\bar x$ is an {\em almost regular $\mathcal{E}$-Pareto solution}  of \eqref{problem-R} if it is both an almost $\mathcal{E}$-Pareto solution and an almost  $\mathcal{E}$-quasi Pareto solution of \eqref{problem-R}.   
\end{enumerate}
\end{definition}
The concepts of an $\mathcal{E}$-Pareto solution and an almost $\mathcal{E}$-Pareto one are independent. The following example shows that  an almost $\mathcal{E}$-Pareto solution may not be an  $\mathcal{E}$-Pareto one.
\begin{example}\rm 
Let $f := (f_1,f_2),$ where  $f_1,f_2: \mathbb{R}^2 \to \mathcal{K}_C$ are defined by
$$ f_1(x)=[x_1-1, x_2+1],\ f_2(x)=[-x_1, -x_2+2], \ \forall x=(x_1,x_2)\in \mathbb{R}^2,$$
and let $g_j: \mathbb{R}^2 \times \mathbb{R} \to \mathbb{R}$, $j=1,2$, be given by
 $$ g_1(x,v_1)=x_1-v_1,\ g_2(x, v_2)=x_2- v_2,\ \mbox{with}\   v_1, v_2 \in [1, 2].$$
 Consider the problem~\eqref{problem-R} with $$
S=\{(x_1,x_2) \in \mathbb{R}^2\ : x_1 \geq 0, x_2 \geq 0 \}.$$
 Then, one has
$g_1(x)=x_1-1,\  g_2(x)=x_2-1$, and
$ \Omega:= \{x\in S\,:x_1\leq 1,x_2\leq 1\}$.
For $\mathcal{E}_1=\mathcal{E}_2=\left[0;  \tfrac{1}{2} \right].$  Then, $\theta=1,$ and
$\Omega_\mathcal{E}:= \{x\in S\,:x_1\leq 2,x_2\leq 2\}.$

Take $z=(1,2)$, we see that $z \in \Omega_\mathcal{E}$. We now show that $z$ is an almost $\mathcal{E}$-Pareto solution   of~\eqref{problem-R}. Indeed, suppose on the contrary that  $z$  is not an  almost  $\mathcal{E}$-Pareto solution of~\eqref{problem-R}. Then, by definition, there exists an $x=(x_1,x_2)\in\Omega$ such that 
\begin{align} \label{ct3}
	\begin{cases}
		f_i(x)\leq_{LU} f_i(z) -\mathcal{E}_i,\ \ &\forall i\in I:=\{1,2\},
		\\ 
		f_k(x)<_{LU} f_k(z) -\mathcal{E}_k,\ \ &\text{for at least}\ \ k\in I. 
	\end{cases} 
\end{align} 
The first inequality of~\eqref{ct3} means that 
\begin{equation*} 
\begin{cases}
		f^L_i(x)\leq  f^L_i(z) -\mathcal{E}^U_i \\
		f^U_i(x)\leq  f^U_i(z) -\mathcal{E}^L_i
\end{cases} 
\Leftrightarrow 
        \begin{cases}
		x_1-1\leq \displaystyle -\frac{1}{2} \\
		x_2+1\leq 3- 0 \\
        -x_1\leq -1- \displaystyle\frac{1}{2} \\
		-x_2+2\leq  0, 
	\end{cases} 
\end{equation*}
or, equivalently,
\begin{equation*}
    \begin{cases}
		x_1\leq \dfrac{1}{2} \\
        x_1\geq \dfrac{3}{2} \\
		x_2\geq 2\\
        x_2 \le 2,
	\end{cases} 
\end{equation*} 
which is impossible. Thus, $z$ is an almost $\mathcal{E}$-Pareto solution
of~\eqref{problem-R}. However,  it is clear that $z\notin \Omega$ and thus $z$  is not an $\mathcal{E}$-Pareto solution of~\eqref{problem-R}.
\end{example}

Let us consider the following scalar optimization problem
\begin{equation}\label{problem-S}\tag{P}
\min \left\{\varphi(x):=\sum_{i\in I}[f_i^L(x)+f_i^U(x)]\,:\, x\in \Omega\right\}.
\end{equation}
\begin{definition}
\rm Let $z\in \Omega_{\mathcal{E}}$. We say that:
\begin{enumerate}[\rm(i)]
\item $z$ is an {\em almost $\theta$-solution} of \eqref{problem-S} if $\varphi(z)\leq \varphi(x)+\theta$ for all $x\in\Omega$.
\item  $z$ is an {\em almost $\theta$-quasi solution} of \eqref{problem-S} if $\varphi(z)\leq \varphi(x)+\sqrt{\theta}\|x-z\|$ for all $x\in\Omega$.
\item $z$ is an {\em almost regular $\theta$-solution} of \eqref{problem-S} if it is both an almost $\theta$-solution and  an almost $\theta$-quasi solution of \eqref{problem-S}.
\end{enumerate}
\end{definition}
The following lemma gives the relationship between  solutions of~\eqref{problem-S} and~\eqref{problem-R}.
\begin{lemma}\label{lemma-3.2}
Let $z\in \Omega_{\mathcal{E}}$. Then the following assertions hold:
\begin{enumerate}[\rm(i)]
	\item If $z$ is an almost $\theta$-solution of \eqref{problem-S}, then it is an  almost   $\mathcal{E}$-Pareto solution of \eqref{problem-R}.
	
	\item  If $z$ is an almost $\theta$-quasi solution of \eqref{problem-S}, then it is an  almost  $\mathcal{E}$-quasi Pareto solution of \eqref{problem-R}.
	
	\item If $z$ is an almost regular $\theta$-solution of \eqref{problem-S}, then it is an  almost regular $\mathcal{E}$-Pareto solution of \eqref{problem-R}.
\end{enumerate}
\end{lemma}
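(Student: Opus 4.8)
The plan is to prove the three assertions by contradiction, reducing each interval-order condition in Definition~\ref{Defi-solution-n1} to a single scalar inequality for the penalty function $\varphi$ by summing over $i\in I$. The engine of the argument is an elementary computation in interval arithmetic: by the rules for interval subtraction and for multiplication by a nonnegative scalar recalled in Section~\ref{Preliminaries}, for any $x\in\mathbb{R}^n$ one has
\begin{equation*}
f_i(z)-\mathcal{E}_i=\left[f_i^L(z)-\epsilon_i^U,\ f_i^U(z)-\epsilon_i^L\right],\qquad
f_i(z)-\tfrac{\mathcal{E}_i}{\sqrt{\theta}}\|x-z\|=\left[f_i^L(z)-\tfrac{\|x-z\|}{\sqrt{\theta}}\epsilon_i^U,\ f_i^U(z)-\tfrac{\|x-z\|}{\sqrt{\theta}}\epsilon_i^L\right].
\end{equation*}
Consequently, by Definition~\ref{dn2.3}, the inequality $f_i(x)\le_{LU}f_i(z)-\mathcal{E}_i$ is equivalent to the conjunction $f_i^L(x)\le f_i^L(z)-\epsilon_i^U$ and $f_i^U(x)\le f_i^U(z)-\epsilon_i^L$, while $f_k(x)<_{LU}f_k(z)-\mathcal{E}_k$ is the same conjunction with at least one of the two inequalities strict; the analogous equivalences hold for \eqref{equa-5-n} with $\epsilon_i^L,\epsilon_i^U$ replaced by $\tfrac{\|x-z\|}{\sqrt{\theta}}\epsilon_i^L,\tfrac{\|x-z\|}{\sqrt{\theta}}\epsilon_i^U$. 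Adding the two scalar inequalities for each $i$ and summing over $i\in I$ makes $\theta=\sum_{i\in I}(\epsilon_i^L+\epsilon_i^U)$ appear on the right-hand side.

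To prove~(i), suppose $z$ is an almost $\theta$-solution of \eqref{problem-S} but, contrary to the claim, there exists $x\in\Omega$ satisfying \eqref{equa-1-n}. Summing the equivalent scalar inequalities as above yields $\varphi(x)\le\varphi(z)-\theta$, and the strict inequality attached to the index $k$ upgrades this to $\varphi(x)<\varphi(z)-\theta$, i.e. $\varphi(z)>\varphi(x)+\theta$ with $x\in\Omega$, which contradicts the definition of an almost $\theta$-solution. The proof of~(ii) is entirely parallel: from \eqref{equa-5-n} for some $x\in\Omega$ one obtains $\varphi(x)\le\varphi(z)-\tfrac{\|x-z\|}{\sqrt{\theta}}\sum_{i\in I}(\epsilon_i^L+\epsilon_i^U)=\varphi(z)-\sqrt{\theta}\,\|x-z\|$, again strict because of the index $k$, contradicting $\varphi(z)\le\varphi(x)+\sqrt{\theta}\,\|x-z\|$. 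Assertion~(iii) is then immediate, since an almost regular $\theta$-solution is by definition both an almost $\theta$-solution and an almost $\theta$-quasi solution, so (i) and (ii) show that $z$ is simultaneously an almost $\mathcal{E}$-Pareto solution and an almost $\mathcal{E}$-quasi Pareto solution of \eqref{problem-R}, that is, an almost regular $\mathcal{E}$-Pareto solution in the sense of Definition~\ref{Definition-2}(ii).

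There is no genuine difficulty here; the only points deserving care are bookkeeping. First, one should note that the hypothesis $z\in\Omega_{\mathcal{E}}$ together with the inclusion $\Omega\subset\Omega_{\mathcal{E}}$ makes the quantifiers in the two families of definitions compatible, so that the competitor $x\in\Omega$ produced by negating ``$z$ is almost $\mathcal{E}$-Pareto'' is an admissible point in the definition of an almost $\theta$-solution. Second, one must observe that the strictness carried by $<_{LU}$ for the single index $k$ genuinely propagates to a strict inequality for $\varphi$, which it does because a finite sum of ``$\le$'' inequalities containing at least one ``$<$'' is ``$<$''. In particular, the argument uses neither convexity nor Lipschitz continuity nor any subdifferential calculus; those tools enter only in the later KKT-type results.
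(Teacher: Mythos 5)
Your proposal is correct and follows essentially the same route as the paper: negate the conclusion, unpack the $LU$-inequalities into the scalar inequalities $f_i^L(x)\le f_i^L(z)-\epsilon_i^U$ and $f_i^U(x)\le f_i^U(z)-\epsilon_i^L$ (with strictness at the index $k$), sum over $i\in I$ to obtain $\varphi(x)<\varphi(z)-\theta$ (respectively $\varphi(x)<\varphi(z)-\sqrt{\theta}\|x-z\|$), and contradict the definition of an almost $\theta$-(quasi) solution; part (iii) then follows by combining (i) and (ii). The paper likewise writes out only case (i) and omits the parallel arguments, so your added bookkeeping remarks are a harmless elaboration rather than a different method.
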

\begin{proof}
(i) Let $z$ be an almost $\theta$-solution of \eqref{problem-S}. Suppose on the contrary that  $z$  is not an  almost  $\mathcal{E}$-Pareto solution of \eqref{problem-R}. Then, by definition, one can find $x\in\Omega$ and $k\in I$ such that 
\begin{equation*} 
	\begin{cases}
		f_i(x)\leq_{LU} f_i(z) -\mathcal{E}_i,\,\forall i\in I,
		\\
		f_k(x)<_{LU} f_k(z) -\mathcal{E}_k.
	\end{cases} 
\end{equation*} 
This means that 
\begin{equation*} 
	\begin{cases}
		f^L_i(x)\leq  f^L_i(z) -\epsilon^U_i, \\
		f^U_i(x)\leq  f^U_i(z) -\epsilon^L_i, 
	\end{cases} 
\end{equation*} 
for all $i\in I$ and $f^L_k(x)<  f^L_k(z) -\epsilon^U_k$, or $f^U_k(x)<  f^U_k(z) -\epsilon^L_k$. Hence,
\begin{equation*}
	\sum_{i\in I}[f_i^L(x)+f_i^U(x)]<\sum_{i\in I}[f_i^L(z)+f_i^U(z)]-\sum_{i\in I}(\epsilon_i^L+\epsilon_i^U),
\end{equation*}
or, equivalently,
\begin{equation*}
\varphi(x)<\varphi (z)-\theta,
\end{equation*}
contrary to the assumption that $z$ is an almost $\theta$-solution of \eqref{problem-S}.

The proofs of the remaining assertions follow similarly to the proof of assertion (i) and are therefore omitted.
\end{proof}
Under the conditions ($\mathcal{C}_1$) and ($\mathcal{C}_2$), we obtain the following upper estimate for the subdifferential of the functions $g_j$. This result is given in the proof of \cite[Theorem 3.3]{Chuong-16}.
\begin{lemma}[{see \cite[Theorem 3.3]{Chuong-16}}]\label{sup-rule}
	Under the conditions ($\mathcal{C}_1$) and ($\mathcal{C}_2$) we have
	\begin{equation*}
		\partial g_j(\bar x)\subset\mathrm{co}\, \{\cup\partial_xg_j(\bar x, v_j)\,:\, v_j\in\mathcal{V}_j(\bar x)\} \ \ \forall j\in J,
	\end{equation*}
	where $\mathrm{co}\, A$ means the convex hull of the set $A$.
\end{lemma}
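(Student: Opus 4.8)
The plan is to run the classical Danskin-type argument for maximum functions, recast for the limiting subdifferential and organized around the two hypotheses: ($\mathcal{C}_1$) will make $g_j$ locally Lipschitz and will bound, uniformly in $v_j$, every subgradient set that appears, while ($\mathcal{C}_2$) keeps active parameters and their subgradients from escaping in the limit — and it is precisely this that lets the conclusion be stated with an ordinary, rather than a closed, convex hull. First I would collect the preliminaries. Since $\mathcal{V}_j$ is compact and $v_j\mapsto g_j(x,v_j)$ is u.s.c.\ on $\mathcal{V}_j$ for every $x\in B(\bar x,\delta_j^{\bar x})$, the supremum defining $g_j(x)$ is attained, and $\mathcal{V}_j(\bar x)$ is nonempty and compact (it is the superlevel set $\{v_j\in\mathcal{V}_j:g_j(\bar x,v_j)\ge g_j(\bar x)\}$, which is closed by upper semicontinuity and sits inside the compact set $\mathcal{V}_j$). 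The common Lipschitz rank $K_j$ in ($\mathcal{C}_1$) passes to the pointwise maximum, so $g_j$ is Lipschitz of rank $K_j$ on $B(\bar x,\delta_j^{\bar x})$; in particular $\partial g_j(\bar x)\subseteq K_j\mathbb{B}$ and $\partial_x g_j(\bar x,v_j)\subseteq K_j\mathbb{B}$ for every $v_j\in\mathcal{V}_j$. (Likewise, $K_j$-Lipschitzness in $x$ together with u.s.c.\ in $v_j$ makes $g_j(\cdot,\cdot)$ jointly u.s.c.\ on $B(\bar x,\delta_j^{\bar x})\times\mathcal{V}_j$.)

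For the core estimate, fix $x^*\in\partial g_j(\bar x)$. Since $g_j$ is locally Lipschitz around $\bar x$, one has $\partial g_j(\bar x)\subseteq\partial^\circ g_j(\bar x)=\overline{\mathrm{co}}\,\partial g_j(\bar x)$, where $\partial^\circ$ denotes the Clarke subdifferential. I would then invoke the Clarke--Danskin theorem on maximum functions over a compact parameter set (Clarke, \emph{Optimization and Nonsmooth Analysis}, Theorem~2.8.2), whose hypotheses are supplied by ($\mathcal{C}_1$)--($\mathcal{C}_2$): the uniform Lipschitz bound and the joint upper semicontinuity of $g_j$ handle the data, and ($\mathcal{C}_2$) provides the closedness of the subgradient multifunction the theorem requires. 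This gives $\partial^\circ g_j(\bar x)\subseteq\overline{\mathrm{co}}\bigcup_{v_j\in\mathcal{V}_j(\bar x)}\partial^\circ_x g_j(\bar x,v_j)$. Using $\partial^\circ_x g_j(\bar x,v_j)=\overline{\mathrm{co}}\,\partial_x g_j(\bar x,v_j)$ and the fact that the closed convex hull of a union of convex hulls equals that of the union, this collapses to
\[
\partial g_j(\bar x)\ \subseteq\ \overline{\mathrm{co}}\ \bigcup_{v_j\in\mathcal{V}_j(\bar x)}\partial_x g_j(\bar x,v_j)\ =:\ \overline{\mathrm{co}}\,A_j .
\]

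It remains to remove the closure. By the preliminary step $A_j\subseteq K_j\mathbb{B}$, so $A_j$ is bounded; it is also closed, since if $w_k\in\partial_x g_j(\bar x,v_j^k)$ with $v_j^k\in\mathcal{V}_j(\bar x)$ and $w_k\to w$, then by compactness of $\mathcal{V}_j(\bar x)$ we may pass to a subsequence with $v_j^k\to\bar v_j\in\mathcal{V}_j(\bar x)$, and the closedness assumption ($\mathcal{C}_2$) at $(\bar x,\bar v_j)$ gives $w\in\partial_x g_j(\bar x,\bar v_j)\subseteq A_j$. Hence $A_j$ is compact, and by Carath\'eodory's theorem $\mathrm{co}\,A_j$ is the continuous image of the compact set $\{\lambda\in\mathbb{R}^{n+1}_+:\sum_i\lambda_i=1\}\times A_j^{n+1}$ under $(\lambda,w_0,\dots,w_n)\mapsto\sum_i\lambda_iw_i$, hence compact, hence closed; thus $\overline{\mathrm{co}}\,A_j=\mathrm{co}\,A_j$. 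Combining with the core estimate, $\partial g_j(\bar x)\subseteq\mathrm{co}\,A_j=\mathrm{co}\{\cup\,\partial_x g_j(\bar x,v_j):v_j\in\mathcal{V}_j(\bar x)\}$, and since $j\in J$ was arbitrary, the lemma follows.

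The main obstacle is the core estimate itself — the Danskin-type passage from one subgradient of the pointwise maximum $g_j$ to a convex combination of subgradients of the component functions $g_j(\cdot,v_j)$ at parameters active at $\bar x$. Whether one quotes the Clarke--Danskin theorem or reconstructs it (say by expressing $x^*$ as a limit of regular subgradients $x_k^*$ of $g_j$ at points $x_k\to\bar x$, choosing active parameters $v_j^k\in\mathcal{V}_j(x_k)$, and pushing the finite maximum rule (Proposition~\ref{max-rule}) and the sum rule (Proposition~\ref{sum-rule}) through the limit), this is exactly where ($\mathcal{C}_2$) is indispensable: without closedness of $(x,v_j)\rightrightarrows\partial_x g_j(x,v_j)$, subgradients attached to parameters active at nearby points need not accumulate at subgradients attached to parameters active at $\bar x$, and the desired inclusion can fail. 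Verifying that the hypotheses of the max-function theorem are genuinely met by ($\mathcal{C}_1$)--($\mathcal{C}_2$) — joint upper semicontinuity of $g_j$ and the semicontinuity/closedness of the associated (Clarke) subgradient multifunction — is the delicate bookkeeping; everything else is routine.
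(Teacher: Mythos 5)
The paper offers no proof of this lemma at all --- it is imported verbatim from the proof of \cite[Theorem 3.3]{Chuong-16} --- so there is nothing in the text to compare against line by line. Your reconstruction is essentially the standard argument underlying that citation: pass from the limiting subdifferential to the Clarke subdifferential, apply the Clarke--Danskin maximum rule over the compact index set, convexify, and then use compactness of $A_j:=\bigcup_{v_j\in\mathcal{V}_j(\bar x)}\partial_x g_j(\bar x,v_j)$ (boundedness from the uniform rank $K_j$, closedness from ($\mathcal{C}_2$) plus compactness of $\mathcal{V}_j(\bar x)$) together with Carath\'eodory to replace $\overline{\mathrm{co}}$ by $\mathrm{co}$. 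All of these steps are sound, and the closure-removal argument in particular is exactly what makes the lemma's statement with an ordinary convex hull legitimate.

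One piece of the ``delicate bookkeeping'' you defer is worth doing explicitly, because it is the only place where the hypotheses do not match the cited theorem verbatim: Clarke's Theorem 2.8.2 asks for closedness of the multifunction $(x,v_j)\rightrightarrows\partial^\circ_x g_j(x,v_j)$ (the Clarke generalized gradient), whereas ($\mathcal{C}_2$) asserts closedness only for the limiting subdifferential $\partial_x g_j$. The bridge is the same device you use later: since all sets $\partial_x g_j(x,v_j)$ lie in $K_j\mathbb{B}$ and $\partial^\circ_x g_j(x,v_j)=\mathrm{co}\,\partial_x g_j(x,v_j)$ in finite dimensions, any convergent sequence $w_k\in\mathrm{co}\,\partial_x g_j(x_k,v_j^k)$ can be written via Carath\'eodory as $\sum_{i=0}^{n}\lambda_i^k u_i^k$ with $u_i^k\in\partial_x g_j(x_k,v_j^k)$, and passing to subsequences with ($\mathcal{C}_2$) shows the limit lies in $\mathrm{co}\,\partial_x g_j(\bar x,\bar v_j)$. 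With that paragraph inserted, the hypothesis check is complete and the proof is correct; without it there is a small but genuine logical gap between what ($\mathcal{C}_2$) gives and what the max-rule consumes.
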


To obtain necessary optimality conditions for approximate Pareto solutions of ~\eqref{problem-R}, we first characterize  necessary optimality conditions for  approximate solutions of~\eqref{problem-S}.
\begin{theorem}\label{Necessary-P} If the objective function $\varphi$ of problem \eqref{problem-S} is bounded from below on $S$, then there exist an almost regular $\theta$-solution $z$ and $\lambda\in\mathbb{R}^p_+$ such that
\begin{align}
&0\in\partial \varphi (z)+\sum_{j\in J}\lambda_j\mathrm{co}\,\left\{\partial_xg_j(z, v_j)\,:\, v_j\in \mathcal{V}_j(z)\right\} +N(z; S)+\sqrt{\theta}\mathbb{B},\label{equa-Eke-4}\\
&\lambda_j>0 \ \ \text{for all}\ \ j\in J\ \ \text{satisfying} \ \ 0<g_j(z)\leq \sqrt{\theta}, \label{equa-Eke-5}
\\
&\lambda_j=0 \ \ \text{if}\ \ g_j(z)\leq 0. \label{equa-Eke-6}
\end{align}	
\end{theorem}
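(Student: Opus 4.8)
The plan is to use Ekeland's variational principle on the penalized problem. First I would apply Ekeland's variational principle to the function $\varphi + \delta_S$ (which is l.s.c. and bounded from below on $S$) together with a penalty term forcing feasibility with respect to the constraints $g_j$. More precisely, consider the penalized functional
\begin{equation*}
\psi(x):=\varphi(x)+\rho\sum_{j\in J}\max\{0,g_j(x)\}+\delta_S(x),
\end{equation*}
for a suitably large penalty parameter $\rho>0$; one checks that $\psi$ is l.s.c. and bounded from below (using that $\varphi$ is bounded from below on $S$ and that the penalty term is nonnegative). Applying Ekeland's variational principle with parameters tied to $\theta$ (taking $\varepsilon=\theta$ and $\lambda=\sqrt{\theta}$ in the usual statement) produces a point $z$ which is an \emph{exact} minimizer of the perturbed function $\psi(\cdot)+\sqrt{\theta}\,\|\cdot-z\|$ and which satisfies $\psi(z)\le\inf_S\psi+\theta$ and $\psi(z)\le\psi(x)+\sqrt{\theta}\|x-z\|$ for all $x$. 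The first estimate, combined with an exact-penalty argument (valid because the $g_j$ are locally Lipschitz by $(\mathcal{C}_1)$, so $\max\{0,g_j(\cdot)\}$ is locally Lipschitz), shows that $z\in\Omega_{\mathcal{E}}$, i.e. $g_j(z)\le\sqrt{\theta}$ for all $j$, and that $z$ is simultaneously an almost $\theta$-solution and an almost $\theta$-quasi solution of \eqref{problem-S}, hence an almost regular $\theta$-solution.

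Next I would extract the subdifferential inclusion. Since $z$ minimizes $x\mapsto\psi(x)+\sqrt{\theta}\|x-z\|$ over $\mathbb{R}^n$, the nonsmooth Fermat rule (Proposition~\ref{Fermat-rule}) gives $0\in\partial\big(\psi(\cdot)+\sqrt{\theta}\|\cdot-z\|\big)(z)$. Applying the sum rule (Proposition~\ref{sum-rule}) — legitimate because all summands except $\delta_S$ are locally Lipschitz around $z$ — yields
\begin{equation*}
0\in\partial\varphi(z)+\rho\sum_{j\in J}\partial\big(\max\{0,g_j(\cdot)\}\big)(z)+N(z;S)+\sqrt{\theta}\,\mathbb{B},
\end{equation*}
using $\partial\delta_S(z)=N(z;S)$ and $\partial\big(\sqrt{\theta}\|\cdot-z\|\big)(z)=\sqrt{\theta}\,\mathbb{B}$. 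For each $j$, the max-rule (Proposition~\ref{max-rule}, applied to the two functions $0$ and $g_j$) gives $\partial\big(\max\{0,g_j(\cdot)\}\big)(z)\subset\{\mu_j\partial g_j(z):\mu_j\in[0,1]\}$ when $g_j(z)=0$, equals $\partial g_j(z)$ when $g_j(z)>0$, and equals $\{0\}$ when $g_j(z)<0$. Setting $\lambda_j:=\rho\mu_j$ and absorbing the case distinctions produces the sign conditions \eqref{equa-Eke-5} and \eqref{equa-Eke-6}: indeed $\lambda_j=0$ whenever $g_j(z)\le 0$ can be arranged (when $g_j(z)=0$ we may take $\mu_j$ free in $[0,1]$ but the conclusion still permits $\lambda_j\ge0$; when $g_j(z)<0$ it is forced), while $\lambda_j=\rho>0$ whenever $0<g_j(z)\le\sqrt{\theta}$. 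Finally, invoking Lemma~\ref{sup-rule} (under $(\mathcal{C}_1)$ and $(\mathcal{C}_2)$) to replace $\partial g_j(z)$ by $\mathrm{co}\,\{\partial_x g_j(z,v_j):v_j\in\mathcal{V}_j(z)\}$ delivers \eqref{equa-Eke-4}.

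The main obstacle I anticipate is the exact-penalty step: one must justify that some finite $\rho$ forces the Ekeland point $z$ into $\Omega_{\mathcal{E}}$ rather than merely into a neighborhood where the penalty is small. This requires a calmness or metric-regularity-type estimate for the constraint system $\{g_j\le 0\}$, or else a direct argument exploiting that if $g_j(z)$ exceeded $\sqrt{\theta}$ then decreasing along a descent direction of $\max\{0,g_j\}$ (whose steepest descent rate is controlled by the common Lipschitz rank $K_j$ from $(\mathcal{C}_1)$) would strictly decrease $\psi$, contradicting the near-optimality $\psi(z)\le\inf\psi+\theta$ once $\rho$ is chosen larger than $\sum_j K_j$ plus the Lipschitz modulus of $\varphi$. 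A secondary technical point is bookkeeping the various $\mathbb{B}$ and $\sqrt\theta$ factors so that the final constant matches exactly the $\sqrt{\theta}\mathbb{B}$ appearing in \eqref{equa-Eke-4}; this is routine once the Ekeland parameters are chosen as $\varepsilon=\theta$, $\lambda=\sqrt{\theta}$.
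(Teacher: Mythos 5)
Your overall strategy (penalize, apply Ekeland's variational principle, then the Fermat rule, the sum rule, and Lemma~\ref{sup-rule}) is the same as the paper's, but your choice of a \emph{linear} penalty $\rho\sum_{j\in J}\max\{0,g_j(\cdot)\}$ with a fixed large $\rho$ creates a genuine gap at the complementarity condition \eqref{equa-Eke-6}. When $g_j(z)=0$, the max-rule (Proposition~\ref{max-rule}) only tells you that the relevant element of $\partial\big(\max\{0,g_j(\cdot)\}\big)(z)$ has the form $\mu_j\xi_j$ for \emph{some} $\mu_j\in[0,1]$ and some $\xi_j\in\partial g_j(z)$; you do not get to choose $\mu_j=0$, so you cannot conclude that $\lambda_j=\rho\mu_j=0$, which is exactly what \eqref{equa-Eke-6} demands for every $j$ with $g_j(z)\le 0$. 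Your remark that ``the conclusion still permits $\lambda_j\ge0$'' misreads the statement: \eqref{equa-Eke-6} forces $\lambda_j=0$ also in the boundary case $g_j(z)=0$. The paper avoids this precisely by using the quadratic penalty $\sum_{j\in J}\tfrac{1}{r_j}[g_j^+(\cdot)]^2$ with $g_j^+:=\max\{0,g_j\}$: since $\partial\big([g_j^+]^2\big)(z)\subset 2g_j^+(z)\,\partial g_j(z)$, the multiplier $\lambda_j=\tfrac{2}{r_j}g_j^+(z)$ vanishes automatically whenever $g_j(z)\le0$ and is strictly positive whenever $g_j(z)>0$, so \eqref{equa-Eke-5}--\eqref{equa-Eke-6} come for free.

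By contrast, the obstacle you single out as the main difficulty --- that forcing $z\in\Omega_{\mathcal{E}}$ ``requires a calmness or metric-regularity-type estimate'' --- is not actually a problem, and importing such a hypothesis would weaken the theorem. The elementary bound the paper uses works equally well for your penalty: from $\psi(z)\le\inf_{\mathbb{R}^n}\psi+\theta\le\inf_{\Omega}\varphi+\theta$ together with $\psi(z)\ge\inf_{S}\varphi+\rho\sum_{j\in J}g_j^+(z)$ one gets $g_j^+(z)\le\big(\inf_{\Omega}\varphi+\theta-\inf_{S}\varphi\big)/\rho<\sqrt{\theta}$ once $\rho$ is chosen large enough; no regularity of the constraint system is needed (the paper performs the analogous computation by letting $r_j\to0$). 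So the feasibility step is fine, and the remaining ingredients --- the two Ekeland inequalities showing $z$ is an almost regular $\theta$-solution of \eqref{problem-S}, the Fermat/sum-rule step with $\partial\delta_S(z)=N(z;S)$ and $\partial\|\cdot-z\|(z)=\mathbb{B}$, and the passage from $\partial g_j(z)$ to $\mathrm{co}\,\{\partial_xg_j(z,v_j):v_j\in\mathcal{V}_j(z)\}$ via Lemma~\ref{sup-rule} --- match the paper. The one point that must be repaired is the complementarity condition at constraints with $g_j(z)=0$, and the cleanest repair is to replace your linear penalty by the paper's quadratic one.
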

\begin{proof}
Consider the following penalized functional
\begin{equation*}
\phi(x):=\varphi(x)+\sum_{j\in J}\frac{1}{r_j}[g_j^+(x)]^2+\delta_S(x), \ \ \forall x\in\mathbb{R}^n,
\end{equation*} 
where $g_j^+(x):=\max\{0, g_j(x)\}$ and $r_j$,  $j\in J$, are real positive numbers.   Clearly, the function $\phi$ is l.s.c. and bounded from below on $\mathbb{R}^n$. Hence, by  the Ekeland variational principle, for $\theta>0$ there exists $z\in \mathbb{R}^n$ such that
\begin{align}
&\phi(z)\leq \phi(x)+\theta, \ \ \forall x\in\mathbb{R}^n,\label{equa-Eke-1}
\\
&\phi(z)\leq \phi(x)+\sqrt{\theta}\|x-z\|, \ \ \forall x\in\mathbb{R}^n. \label{equa-Eke-2}
\end{align} 
It follows from \eqref{equa-Eke-1} that $z\in S$ and
\begin{equation}\label{equa-Eke-3}
	\varphi(z)\leq\phi(z)\leq \varphi(x)+\theta, \ \ \forall x\in \Omega, 
\end{equation}
or, equivalently,
\begin{equation*}
	\varphi(z)\leq\varphi(z)+\sum_{j\in J}\frac{1}{r_j}[g_j^+(z)]^2\leq \inf_{x\in\Omega}\varphi(x)+\theta. 
\end{equation*}
This and the fact that $\inf_{x\in S}\varphi(x)\leq \varphi(z)$ imply that
\begin{equation*}
\sum_{j\in J}\frac{1}{r_j}[g_j^+(z)]^2\leq \inf_{x\in\Omega}\varphi(x)+\theta -\inf_{x\in S}\varphi(x).
\end{equation*}
Hence, for each $j\in J$, $g_j^+(z)$ tends to $0$ as $r_j\to 0$. Consequently, there exists $r_j^\theta$  such that $g^+_j(z)<\sqrt{\theta}$ for all $r_j\leq r_j^\theta$.  Hence, with such a choice of $r_j$, $j\in J,$  we can define a point $z\in \Omega_{\mathcal{E}}$. Furthermore, from \eqref{equa-Eke-2},  we have
\begin{equation*}
\varphi(z)\leq \varphi(x)+\sqrt{\theta}\|x-z\| \ \ \forall x\in \Omega.
\end{equation*} 
This together with  \eqref{equa-Eke-3} imply  that $z$ is an almost regular $\theta$-solution of \eqref{problem-S}. Now, it follows from \eqref{equa-Eke-2} that   $z$ is a minimizer of the scalar optimization  problem
\begin{equation*}
\min\{\phi(x)+\sqrt{\theta}\|x-z\|\,:\, x\in\mathbb{R}^n\}.
\end{equation*}
By Propositions \ref{Fermat-rule}, \ref{sum-rule}, and the fact that $\partial\|\cdot-z\|(z)=\mathbb{B}$ (see \cite[Example 4, p. 198]{Ioffe79})), we get 
\begin{equation*}
	0\in \partial\phi(z) +\sqrt{\theta}\mathbb{B}. 
\end{equation*}
From the Lipschitz continuity of functions $\varphi$ and $g_j^+$, $j\in J$, one has
\begin{align*}
\partial \phi(z)&\subset \partial\varphi(z)+\sum_{j\in J}\frac{1}{r_j}\partial[g_j^+(\cdot)]^2(z)+N(z; S)
\\
&\subset \partial\varphi(z)+2\sum_{j\in J}\frac{1}{r_j}g_j^+(z)\partial g_j(z)+N(z; S)
\\
&\subset \partial\varphi(z)+2\sum_{j\in J}\frac{1}{r_j}g_j^+(z)\mathrm{co}\,\{\partial_xg_j(z, v_j)\,:\, v_j\in \mathcal{V}_j(z)\} +N(z; S),
\end{align*}
where the last inclusion is deduced from Lemma \ref{sup-rule}. Hence, by setting $\lambda_j:=\dfrac{2}{r_j}g_j^+(z)$, $j\in J$, we obtain conditions \eqref{equa-Eke-4}--\eqref{equa-Eke-6}. The proof is complete.
\end{proof}
Building upon Theorem \ref{Necessary-P}, Lemma \ref{lemma-3.2}, and Proposition \ref{sum-rule}, we are now in a position to derive the necessary optimality conditions for~\eqref{problem-R}.
\begin{theorem}\label{Existence-Theorem} If the objective function $\varphi$ of problem \eqref{problem-S} is bounded from below on $S$, then there exist an almost regular   $\mathcal{E}$-solution $z$ of \eqref{problem-R} and $\lambda\in\mathbb{R}^p_+$ such that
\begin{align}
	&0\in\sum_{i\in I}[\partial f_i^L(z)+\partial f_i^U(z)]+\sum_{j\in J}\lambda_j\mathrm{co}\,\{\partial_xg_j(z, v_j)\,:\, v_j\in \mathcal{V}_j(z)\} +N(z; S)+\sqrt{\theta}\mathbb{B}, \label{equation-8-n} \\
	&\lambda_j>0 \ \ \text{for all}\ \ j\in J\ \ \text{satisfying} \ \ 0<g_j(z)\leq \sqrt{\theta}, \label{equation-9-n} 
	\\
	&\lambda_j=0 \ \ \text{if}\ \ g_j(z)\leq 0.  \label{equation-5-n}
\end{align}	
\end{theorem}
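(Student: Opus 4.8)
The plan is to chain together exactly the three ingredients cited in the sentence preceding the statement. First I would invoke Theorem~\ref{Necessary-P}: since $\varphi$ is bounded from below on $S$, it produces an almost regular $\theta$-solution $z$ of \eqref{problem-S} together with a multiplier vector $\lambda\in\mathbb{R}^p_+$ for which \eqref{equa-Eke-4}, \eqref{equa-Eke-5}, and \eqref{equa-Eke-6} hold. Conditions \eqref{equa-Eke-5} and \eqref{equa-Eke-6} are verbatim \eqref{equation-9-n} and \eqref{equation-5-n}, so the sign constraints on $\lambda$ require no further work.

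Next I would translate the solution concept from the scalarized problem to \eqref{problem-R}. By Lemma~\ref{lemma-3.2}(iii), every almost regular $\theta$-solution of \eqref{problem-S} is an almost regular $\mathcal{E}$-Pareto solution of \eqref{problem-R}; applying this to the point $z$ obtained above immediately yields that $z\in\Omega_{\mathcal{E}}$ possesses the required solution property for \eqref{problem-R}.

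It then remains to replace $\partial\varphi(z)$ by the sum of the individual subdifferentials. Since $\varphi=\sum_{i\in I}(f_i^L+f_i^U)$ is a finite sum of functions that are locally Lipschitz around $z$ (by the standing assumptions on \eqref{problem}, each $f_i^L$ and $f_i^U$ is locally Lipschitz on $\mathbb{R}^n$), Proposition~\ref{sum-rule} applies — its hypothesis ``all but one locally Lipschitz'' being satisfied \emph{a fortiori} — and gives
\[
\partial\varphi(z)\subset\sum_{i\in I}\bigl[\partial f_i^L(z)+\partial f_i^U(z)\bigr].
\]
Substituting this inclusion into \eqref{equa-Eke-4} produces exactly \eqref{equation-8-n}, which completes the argument.

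I do not expect any serious obstacle here: all of the genuine analytic content is already packaged in Theorem~\ref{Necessary-P} (the Ekeland variational principle combined with the quadratic-penalization estimate that drives $g_j^+(z)$ below $\sqrt{\theta}$) and in Lemma~\ref{sup-rule} (the convex-hull bound for $\partial g_j$). The only point deserving a moment's attention is confirming the applicability of the sum rule, i.e.\ that the local Lipschitz continuity of each $f_i^L,f_i^U$ around $z$ is genuinely available — which it is, being part of the hypotheses on problem \eqref{problem}.
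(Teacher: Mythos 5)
Your proposal is correct and matches the paper's intended derivation exactly: the paper gives no separate proof of this theorem, stating only that it follows from Theorem~\ref{Necessary-P}, Lemma~\ref{lemma-3.2}, and Proposition~\ref{sum-rule}, which is precisely the chain you execute (scalar KKT conditions for an almost regular $\theta$-solution, transfer of the solution concept via Lemma~\ref{lemma-3.2}(iii), and the sum rule to split $\partial\varphi(z)$). No gaps.
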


\begin{definition}
\rm 
A pair $(z, \lambda)\in \Omega_{\mathcal{E}}\times\mathbb{R}^p_+$ is called a {\em Karush--Kuhn--Tucker (KKT)  pair up to $\mathcal{E}$} of \eqref{problem-R} if it satisfies conditions \eqref{equation-8-n}--\eqref{equation-5-n}. 
\end{definition}

Next, we present sufficient conditions for approximate quasi Pareto solutions of \eqref{problem-R}. To do this, we need to introduce concepts of  generalized convexity   for a family of locally Lipschitz functions. The following definition is motivated from  \cite{Fakhar-19,Hung-Tuan-Tuyen}.

Wet set $g:=(g_1, \ldots, g_p)$ for convenience in the sequel.
\begin{definition}
{\rm  
 We say that $(\varphi, g)$ is {\em $\theta$-pseudo-quasi generalized  convex on $S$} at $z\in S$ if for any $x\in S$, $z_i^{*L}\in\partial f_i^L(z)$,  $z_i^{*U}\in\partial f_i^U(z)$, $i\in I$, and $x^*_{v_j}\in\partial_x g_j(z, v_j)$, $v_j\in \mathcal{V}_j(z)$, $j\in J$, there exists $\omega\in [N(z; S)]^\circ$ satisfying
\begin{equation}\label{equ:3}
\begin{split}
&\sum_{i\in I}[\langle z_i^{*L}, \omega\rangle+\langle z_i^{*U}, \omega\rangle] + \sqrt{\theta}\|x-z\|\geq 0 \Rightarrow \varphi(x) \geq \varphi(z)-\sqrt{\theta}\|x-z\|, 
\\
&g_j(x, v_j)\leq g_j(z, v_j) \Rightarrow \langle x^*_{v_j}, \omega\rangle\leq 0,\ \  v_j\in \mathcal{V}_j(z), j\in J,\ \ \text{and}\ \ \\
& \langle b^*,\omega\rangle\leq \|x-z\|,\ \ \forall b^*\in \mathbb{B}.
\end{split}
\end{equation}
}
\end{definition}

\begin{theorem}[{Sufficient condition of type I}]\label{Sufficient-Theorem-I} Let $(z, \lambda)\in\Omega_{\mathcal{E}}\times\mathbb{R}^p_+$ be a KKT pair up to $\mathcal{E}$ of \eqref{problem-R}. If $(\varphi, g)$ is $\theta$-pseudo-quasi generalized convex on $S$ at $z$, then $z\in \mathcal{E}$-$\mathcal{S}_{a}^{q}\eqref{problem-R}$.
\end{theorem}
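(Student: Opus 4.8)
The plan is to argue by contradiction: assume $z \notin \mathcal{E}\text{-}\mathcal{S}_a^q\eqref{problem-R}$, so there exist $x \in \Omega$ and $k \in I$ satisfying \eqref{equa-5-n}. First I would translate this interval relation into the scalar inequalities on the endpoint functions, exactly as in the proof of Lemma~\ref{lemma-3.2}: for all $i \in I$,
\begin{equation*}
f_i^L(x) \leq f_i^L(z) - \tfrac{\epsilon_i^U}{\sqrt{\theta}}\|x-z\|, \qquad f_i^U(x) \leq f_i^U(z) - \tfrac{\epsilon_i^L}{\sqrt{\theta}}\|x-z\|,
\end{equation*}
with at least one strict inequality for the index $k$. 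Summing all $2m$ of these gives
\begin{equation*}
\varphi(x) = \sum_{i\in I}[f_i^L(x)+f_i^U(x)] < \sum_{i\in I}[f_i^L(z)+f_i^U(z)] - \tfrac{1}{\sqrt{\theta}}\Big(\sum_{i\in I}(\epsilon_i^L+\epsilon_i^U)\Big)\|x-z\| = \varphi(z) - \sqrt{\theta}\,\|x-z\|,
\end{equation*}
using $\theta = \sum_{i\in I}(\epsilon_i^L+\epsilon_i^U)$ and $\theta/\sqrt{\theta} = \sqrt{\theta}$. (If $x=z$ the strict inequality is already a contradiction with \eqref{equ:3}, so one may assume $x \neq z$.)

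Next I would exploit the KKT relation \eqref{equation-8-n}: there are $z_i^{*L} \in \partial f_i^L(z)$, $z_i^{*U} \in \partial f_i^U(z)$, convex-combination vectors $\xi_j \in \mathrm{co}\{\partial_x g_j(z,v_j): v_j \in \mathcal{V}_j(z)\}$, $\eta \in N(z;S)$, and $b^* \in \mathbb{B}$ with
\begin{equation*}
\sum_{i\in I}(z_i^{*L}+z_i^{*U}) + \sum_{j\in J}\lambda_j \xi_j + \eta + \sqrt{\theta}\, b^* = 0.
\end{equation*}
Apply the $\theta$-pseudo-quasi generalized convexity of $(\varphi,g)$ at $z$ for this point $x$ and these subgradients; write $\xi_j = \sum_\ell \mu_{j\ell} x^*_{v_{j\ell}}$ as a convex combination of elements $x^*_{v_{j\ell}} \in \partial_x g_j(z,v_{j\ell})$ with $v_{j\ell} \in \mathcal{V}_j(z)$, and feed each $x^*_{v_{j\ell}}$ into the definition to obtain a single $\omega \in [N(z;S)]^\circ$ witnessing all three implications in \eqref{equ:3} simultaneously. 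Since $\varphi(x) < \varphi(z) - \sqrt{\theta}\|x-z\|$, the contrapositive of the first implication forces
\begin{equation*}
\sum_{i\in I}[\langle z_i^{*L},\omega\rangle + \langle z_i^{*U},\omega\rangle] + \sqrt{\theta}\,\|x-z\| < 0.
\end{equation*}
On the other hand, pairing the KKT identity with $\omega$ gives
\begin{equation*}
\sum_{i\in I}\langle z_i^{*L}+z_i^{*U},\omega\rangle = -\sum_{j\in J}\lambda_j\langle \xi_j,\omega\rangle - \langle \eta,\omega\rangle - \sqrt{\theta}\langle b^*,\omega\rangle.
\end{equation*}
Here $\langle \eta,\omega\rangle \leq 0$ because $\omega \in [N(z;S)]^\circ$; $\langle b^*,\omega\rangle \leq \|x-z\|$ by the third implication of \eqref{equ:3}; and for each $j$ with $\lambda_j > 0$ we have, by \eqref{equation-9-n}, $g_j(z) > 0 = g_j(z,v_j)$ fails — rather, more carefully, for the active $v_j \in \mathcal{V}_j(z)$ one has $g_j(x,v_j) \le g_j(x) \le 0 = g_j(z) = g_j(z,v_j)$ since $x \in \Omega$ and (for $\lambda_j>0$) $g_j(z) = 0$ would contradict \eqref{equation-9-n}; so in fact $\lambda_j>0$ forces $g_j(z) > 0 \ge g_j(x) \ge g_j(x,v_j)$, hence $g_j(x,v_j) \le g_j(z,v_j)$ and the second implication yields $\langle x^*_{v_{j\ell}},\omega\rangle \le 0$, so $\langle \xi_j,\omega\rangle \le 0$ and $\lambda_j\langle\xi_j,\omega\rangle \le 0$. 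Combining,
\begin{equation*}
\sum_{i\in I}\langle z_i^{*L}+z_i^{*U},\omega\rangle \geq -\sqrt{\theta}\,\|x-z\|,
\end{equation*}
i.e. $\sum_{i\in I}[\langle z_i^{*L},\omega\rangle+\langle z_i^{*U},\omega\rangle] + \sqrt{\theta}\|x-z\| \geq 0$, contradicting the strict inequality above. Hence no such $x$ exists and $z \in \mathcal{E}\text{-}\mathcal{S}_a^q\eqref{problem-R}$.

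The main obstacle I anticipate is the bookkeeping in passing from the single vectors $\xi_j$ appearing in \eqref{equation-8-n} to the individual subgradients $x^*_{v_j} \in \partial_x g_j(z,v_j)$ that the convexity definition \eqref{equ:3} actually consumes, and in checking that one $\omega$ can be chosen to service all the implications at once — this is really just the logical structure of the definition, but it needs to be unwound carefully. The second delicate point is the sign analysis for the constraint terms: one must use the complementarity-type information \eqref{equation-9-n}–\eqref{equation-5-n} to ensure $\lambda_j \langle \xi_j,\omega\rangle \le 0$ precisely for those $j$ that contribute, relying on $x \in \Omega$ so that $g_j(x,v_j) \le 0 < g_j(z) = g_j(z,v_j)$ whenever $\lambda_j > 0$. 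Everything else is the summation of endpoint inequalities and a single pairing with $\omega$, which are routine.
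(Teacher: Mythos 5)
Your proof is correct and follows essentially the same route as the paper's: reduce the negation of almost $\mathcal{E}$-quasi Pareto optimality to the scalar inequality $\varphi(x)<\varphi(z)-\sqrt{\theta}\|x-z\|$ (the paper does this by invoking Lemma~\ref{lemma-3.2}, you re-derive it by summing the endpoint inequalities), then pair the KKT identity with the $\omega$ supplied by the $\theta$-pseudo-quasi generalized convexity, using the complementarity conditions to show the constraint terms are nonpositive. The only cosmetic slip is that the implication $\lambda_j>0\Rightarrow g_j(z)>0$ comes from \eqref{equation-5-n} rather than \eqref{equation-9-n}, which your own self-correction effectively acknowledges.
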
	
\begin{proof} Since $(z, \lambda)$ is a KKT pair up to $\mathcal{E}$ of \eqref{problem-R}, there exist $z_i^{*L}\in\partial f_i^L(z)$,  $z_i^{*U}\in\partial f_i^U(z)$, $i\in I$,  $\lambda_{jk}\geq 0$, $x^*_{jk}\in\partial_x g_j(z, v_{jk})$, $v_{jk}\in\mathcal{V}_j(z)$, $k=1, \ldots, k_j$, $k_j\in\mathbb{N}$, $\sum_{k=1}^{k_j}\lambda_{jk}=1$,    $b^*\in \mathbb{B}$, and $\omega^* \in N(z; S)$ such that
\begin{equation*}
\sum_{i\in I}( z_i^{*L}+  z_i^{*U})+\sum_{j\in J}\lambda_j\bigg(\sum_{k=1}^{k_j}\lambda_{jk}x^*_{jk}\bigg)+\sqrt{\theta}b^*+\omega^*=0.
\end{equation*}	
Hence,
\begin{equation}\label{equ:5}
\sum_{i\in I}( z_i^{*L}+  z_i^{*U})+\sum_{j\in J} \sum_{k=1}^{k_j}\lambda_j\lambda_{jk}x^*_{jk} +\sqrt{\theta}b^*=-\omega^*.
\end{equation}
Suppose on the contrary that $z$ is not an almost  $\mathcal{E}$-quasi Pareto solution of \eqref{problem-R}, then, by Lemma \ref{lemma-3.2}, it is not an almost $\theta$-quasi solution of \eqref{problem-S}. Hence,    there exists $x\in \Omega$ such that  
\begin{equation}\label{equation-4-n}
\varphi (x)<\varphi(z)-\sqrt{\theta}\|x-z\|.
\end{equation}
By the $\theta$-pseudo-quasi generalized  convexity of $(\varphi, g)$ at $z\in S$, there is $\omega\in [N(z; S)]^\circ$ satisfying \eqref{equ:3}. Thus, we deduce from \eqref{equ:3} and \eqref{equation-4-n} that
\begin{equation}\label{equation-6-n}
\sum_{i\in I}[\langle z_i^{*L}, \omega\rangle+\langle z_i^{*U}, \omega\rangle] + \sqrt{\theta}\|x-z\|< 0.
\end{equation}
For each $j\in J$, by \eqref{equation-5-n}, we see that $\lambda_j=0$ whenever $g_j(z)\leq 0$ and $\lambda_j>0$ if otherwise. Furthermore, when $g_j(z)>0$, then due to the fact that $x\in\Omega$, one has
 \begin{equation*}
 g_j(x, v_{jk})\leq 0<g_j(z)=g_j(z, v_{jk}), \ \ \forall v_{jk}\in\mathcal{V}_j(z)
 \end{equation*}
 and so we get
\begin{equation*}
\langle x_{jk}^*, \omega\rangle\leq 0.
\end{equation*}
This implies that
\begin{equation}\label{equation-7-n}
\sum_{j\in J} \sum_{k=1}^{k_j}\lambda_j\lambda_{jk}\langle x^*_{jk}, \omega \rangle\leq 0.
\end{equation}
Since  $\omega\in [N(z; S)]^\circ$, \eqref{equ:3}, \eqref{equ:5}, \eqref{equation-6-n}, and~\eqref{equation-7-n} we obtain
\begin{align*}
0&\leq \langle -\omega^*,\omega\rangle= \sum_{i\in I}[\langle z_i^{*L}, \omega\rangle+\langle z_i^{*U}, \omega\rangle] +\sum_{j\in J} \sum_{k=1}^{k_j}\lambda_j\lambda_{jk}\langle x^*_{jk}, \omega \rangle+ \sqrt{\theta}\langle b^*,\omega\rangle 
\\
&<-\sqrt{\theta}\|x-z\|+\sqrt{\theta}\langle b^*,\omega\rangle\leq 0,
\end{align*} 
a contradiction. Hence, $z\in \mathcal{E}$-$\mathcal{S}_{a}^{q}\eqref{problem-R}$.   The proof  is complete.  
\end{proof} 

\begin{example} \rm Let	$f_1(x)=[2x_1-1, 3x_2+2]$, $f_2(x)=[x_1+1,x_2+4]$ and
		$$S=\{(x_1,x_2)\in\mathbb{R}^2:x_1\ge 0, x_2\ge 0\}, \ \forall x=(x_1,x_2)\in \mathbb{R}^2.$$
	Consider the problem~\eqref{problem-R}	with  $ g_1(x,v_1)=-x_1+v_1, \  g_2(x,v_2)=-2x_2+v_2,$ for all $v_1,v_2\in[0,1].$
	
        We first see that $f_1, f_2, g_1, g_2$ are Lipschitz, 
		$ g_1(x)=-x_1+1\le 0,$ $g_2(x)=-2x_2+1\le 0, $	 
        and $\Omega=\left\{x\in\mathbb{R}^2: x_1\ge 1, x_2\ge\frac{1}{2}\right\}.$  Put $\mathcal{E}_1=[0,\frac{1}{2}], \mathcal{E}_2=[0,\frac{1}{2}]$. Then, one has $\theta=1$ and 
		$ \Omega_{\mathcal{E}}=\{x\in\mathbb{R}:x_1\ge 0, x_2 \ge 0\}.$ 
        Let $z=\left(0,0\right)$. It is not hard to see that $(z,\lambda) \in \Omega_{\mathcal{E}}\times \mathbb{R}^2_+$, where $\lambda=(3,1)$ is a KKT pair up to $\mathcal{E}$ of~\eqref{problem-R}. We next verify the $\theta$-pseudo quasi generalized convexity on $S$ at $z$.  By simple calculations, we get
		\begin{align*}
			&\partial f_1^L(z)=\{2\},\ \partial f_1^U(z)=\{3\}, \ \partial f_2^L(z)=\{1\},\ \partial f_2^U(z)=\{1\},\\
			& \partial_xg_1(z,v_1)=\{-1\}, \ \partial_xg_2(z,v_2)=\{-2\}, \ N(z;S)=\{(0,0)\}.
		\end{align*}
So, we can find $w=(0,0)\in [N(z;S)]^\circ $ satisfying~\eqref{equ:3}. The latter means that all the assumptions of Theorem~\ref{Sufficient-Theorem-I} are fulfilled.

		Consider  the scalar optimization problem~\eqref{problem-S} with
        $\varphi(x)=3x_1+4x_2+6$ and the constraint set $\Omega=\{x\in\mathbb{R}^2: x_1\ge 1, x_2\ge \frac{1}{2}\}$. Since	$\varphi(z)\le \varphi(x)+\sqrt{\theta}\|x-z\|\ $ for all $x\in\Omega$, it follows that $z=(0,0)$ is an almost $\theta$-quasi solution of~\eqref{problem-S}.
		Thus, by applying Lemma~\ref{lemma-3.2} we conclude that $z$ is an almost $\mathcal{E}$-quasi Pareto solution of~\eqref{problem-R}.
\end{example}
The following definitions are inspired from \cite{Chuong-16,Fakhar-19,Hung-Tuan-Tuyen}.
\begin{definition}
	{\rm  We say that $(f, g)$ is {\em generalized  convex on $S$} at $z\in S$ if for any $x\in S$, $z_i^{*L}\in\partial f_i^L(z)$,  $z_i^{*U}\in\partial f_i^U(z)$, $i\in I$, and $x^*_{v_j}\in\partial_x g_j(z, v_j)$, $v_j\in \mathcal{V}_j(z)$, $j\in J$, there exists $\omega\in [N(z; S)]^\circ$ satisfying
			\begin{equation*} 
				\begin{split}
					&f_i^L(x) -  f_i^L(z)\geq \langle z_i^{*L}, \omega\rangle, \ \ \forall i\in I,
					\\
					&f_i^U(x) -  f_i^U(z)\geq \langle z_i^{*U}, \omega\rangle, \ \ \forall i\in I,
					\\
					&g_j(x, v_j)- g_j(z, v_j)\geq  \langle x^*_{v_j}, \omega\rangle,\ \  v_j\in \mathcal{V}_j(z), j\in J,\ \ \text{and}\ \ \\
					& \langle b^*,\omega\rangle\leq \|x-z\|,\ \ \forall b^*\in\mathbb{B}.
				\end{split}
			\end{equation*}
	}
\end{definition}

\begin{definition}
	{\rm  We say that $(f, g)$ is {\em $\mathcal{E}$-pseudo-quasi generalized  convex on $S$} at $z\in S$ if for any $x\in S$, $z_i^{*L}\in\partial f_i^L(z)$,  $z_i^{*U}\in\partial f_i^U(z)$, $i\in I$, and $x^*_{v_j}\in\partial_x g_j(z, v_j)$, $v_j\in \mathcal{V}_j(z)$, $j\in J$, there exists $\omega\in [N(z; S)]^\circ$ satisfying
			\begin{equation}\label{equ:3-n}
				\begin{split}
					&\langle z_i^{*L}, \omega\rangle + \frac{\epsilon_i^U}{\sqrt{\theta}}\|x-z\|\geq 0 \Rightarrow f_i^L(x) \geq  f_i^L(z)-\frac{\epsilon_i^U}{\sqrt{\theta}}\|x-z\|, \ \ \forall i\in I,
			\\
			&\langle z_i^{*U}, \omega\rangle + \frac{\epsilon_i^L}{\sqrt{\theta}}\|x-z\|\geq 0 \Rightarrow f_i^U(x) \geq  f_i^U(z)-\frac{\epsilon_i^L}{\sqrt{\theta}}\|x-z\|, \ \ \forall i\in I,
					\\
					&g_j(x, v_j)\leq g_j(z, v_j) \Rightarrow \langle x^*_{v_j}, \omega\rangle\leq 0,\ \  v_j\in \mathcal{V}_j(z), j\in J,\ \ \text{and}\ \ \\
					& \langle b^*,\omega\rangle\leq \|x-z\|,\ \ \forall b^*\in\mathbb{B}.
				\end{split}
			\end{equation}
	}
\end{definition}
\begin{remark}\rm 
   It is straightforward to observe that if $(f,g)$ is  generalized convex on $S$ at $\bar x$, then, for any $\mathcal{E},$ $(f,g)$  is $\mathcal{E}$-pseudo-quasi generalized convex on $S$ at $\bar x$. Moreover, the class of $\mathcal{E}$-pseudo-quasi generalized convex functions is properly lager than the one of generalized convex functions.
To demonstrate this point, let us consider the following example.
\end{remark}
\begin{example}\label{ex3.3} \rm 
Consider the problem \eqref{problem-R} with
		$f_1(x)=[x,x+2]$,
		$f_2(x)=[x,x+1],$ $S=\{x\in\mathbb{R}:x\ge 0\}$, and
		$g(x,v)=-x^2-v+1, v\in[0,1]$. Take $\mathcal{E}_1=[0,\frac{1}{2}], \mathcal{E}_2=[0,\frac{1}{2}]$.  We will prove $(f, g)$ is $\mathcal{E}$-pseudo-quasi generalized convex at $z=0$ but not generalized convex at this point. Indeed, we first see that
        \begin{align*}
           & \partial f_1^L(z)=\{1\}, \ \partial f_1^U(z)=\{1\},\\
          &  \partial f_2^L(z)=\{1\}, \ \partial f_2^U(z)=\{1\},\\
          &  \partial_xg(z,v)=\{0\}, \ N(z;S)=(-\infty, 0].
        \end{align*}
    Then, for any $x\in S$, $z_1^{*L}=1,$   $z_1^{*U}=1,$   $z_2^{*L}=1,$   $z_2^{*U}=1,$ and $x^*_v=0$, we always find $w\in [N(z;S)]^\circ$   such that~\eqref{equ:3-n} holds. In the other words, $(f, g)$ is $\mathcal{E}$-pseudo-quasi generalized convex at $z=0$. However, $(f, g)$ is not generalized convex on $S$ at $z =0$ since for any $x\in S$,
		$$g(x, v)- g(z, v)\geq  \langle x^*_{jk}, \omega\rangle \Leftrightarrow -x^2\geq  0,$$
       a contradiction. 
\end{example}
%{\color{red} Phan tich moi quan he giua cac khai niem generalized  convex o tren. Cho vi du minh hoa. (xem \cite{Fakhar-19,Hung-Tuan-Tuyen,Saadati-Oveisiha})}

\begin{theorem}[{Sufficient condition of type II}]\label{Sufficient-Theorem-II}  Let $(z, \lambda)\in\Omega_{\mathcal{E}}\times\mathbb{R}^p_+$ be a KKT pair up to $\mathcal{E}$ of \eqref{problem-R}. If $(f, g)$ is $\mathcal{E}$-pseudo-quasi generalized convex on $S$ at $z$, then $z\in \mathcal{E}$-$\mathcal{S}_{a}^{q}\eqref{problem-R}$.
\end{theorem}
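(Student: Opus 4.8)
The plan is to mimic the structure of the proof of Theorem~\ref{Sufficient-Theorem-I}, replacing the single scalarized function $\varphi$ by the component-wise data of $f$. Since $(z,\lambda)$ is a KKT pair up to $\mathcal{E}$, by the definition of the convex hull and Carathéodory's theorem I first unpack condition \eqref{equation-8-n}: there exist $z_i^{*L}\in\partial f_i^L(z)$, $z_i^{*U}\in\partial f_i^U(z)$ for $i\in I$, multipliers $\lambda_{jk}\geq 0$ with $\sum_{k=1}^{k_j}\lambda_{jk}=1$, subgradients $x^*_{jk}\in\partial_x g_j(z,v_{jk})$ with $v_{jk}\in\mathcal{V}_j(z)$, an element $b^*\in\mathbb{B}$, and $\omega^*\in N(z;S)$ such that
\begin{equation*}
\sum_{i\in I}(z_i^{*L}+z_i^{*U})+\sum_{j\in J}\sum_{k=1}^{k_j}\lambda_j\lambda_{jk}x^*_{jk}+\sqrt{\theta}\,b^*=-\omega^*.
\end{equation*}

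Next I argue by contradiction: suppose $z\notin\mathcal{E}\text{-}\mathcal{S}_a^q\eqref{problem-R}$. Then there is $x\in\Omega$ and $k\in I$ satisfying \eqref{equa-5-n}, which spelled out in lower/upper bounds gives $f_i^L(x)\le f_i^L(z)-\frac{\epsilon_i^U}{\sqrt{\theta}}\|x-z\|$ and $f_i^U(x)\le f_i^U(z)-\frac{\epsilon_i^L}{\sqrt{\theta}}\|x-z\|$ for all $i\in I$, with at least one strict. Now apply the $\mathcal{E}$-pseudo-quasi generalized convexity of $(f,g)$ at $z$: there is $\omega\in[N(z;S)]^\circ$ satisfying \eqref{equ:3-n}. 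The contrapositive of the first two implications in \eqref{equ:3-n}, fed the inequalities just obtained, yields $\langle z_i^{*L},\omega\rangle+\frac{\epsilon_i^U}{\sqrt{\theta}}\|x-z\|<0$ and $\langle z_i^{*U},\omega\rangle+\frac{\epsilon_i^L}{\sqrt{\theta}}\|x-z\|<0$ for every $i\in I$ (strictly for the index $k$, non-strict suffices for the rest to reach a strict sum since $\theta>0$ and at least one relation is strict). Summing over $i\in I$ and using $\sum_{i\in I}(\epsilon_i^L+\epsilon_i^U)=\theta$ gives
\begin{equation*}
\sum_{i\in I}\big[\langle z_i^{*L},\omega\rangle+\langle z_i^{*U},\omega\rangle\big]+\sqrt{\theta}\,\|x-z\|<0.
\end{equation*}

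For the constraint multipliers I reuse verbatim the argument from Theorem~\ref{Sufficient-Theorem-I}: by \eqref{equation-5-n} only indices $j$ with $g_j(z)>0$ carry $\lambda_j>0$, and for those, $x\in\Omega$ forces $g_j(x,v_{jk})\le 0<g_j(z)=g_j(z,v_{jk})$ for all $v_{jk}\in\mathcal{V}_j(z)$, so the third implication in \eqref{equ:3-n} gives $\langle x^*_{jk},\omega\rangle\le 0$, hence $\sum_{j\in J}\sum_{k=1}^{k_j}\lambda_j\lambda_{jk}\langle x^*_{jk},\omega\rangle\le 0$. Pairing the displayed KKT equation with $\omega$, using $\omega\in[N(z;S)]^\circ$ so $\langle-\omega^*,\omega\rangle\ge 0$, the bound $\langle b^*,\omega\rangle\le\|x-z\|$ from \eqref{equ:3-n}, and the two inequalities above, I obtain $0\le\langle-\omega^*,\omega\rangle<-\sqrt{\theta}\|x-z\|+\sqrt{\theta}\langle b^*,\omega\rangle\le 0$, a contradiction. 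Therefore $z\in\mathcal{E}\text{-}\mathcal{S}_a^q\eqref{problem-R}$.

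The one place that needs a little care — and the only real obstacle — is the summation step: the relations in \eqref{equa-5-n} are non-strict in every component except one, so I must check that combining the contrapositives of \eqref{equ:3-n} still produces a \emph{strict} inequality in the summed estimate. This works because for the strict index $k$ the contrapositive gives a strict inequality (note $\epsilon_i^L$ or $\epsilon_i^U$ may vanish, but the $\|x-z\|$ term and strictness in $f_k$ propagate since $<_{LU}$ forces a strict inequality in at least one bound), and adding non-strict inequalities to one strict one keeps it strict; the identity $\sum_{i\in I}(\epsilon_i^L+\epsilon_i^U)=\theta$ is exactly what converts the accumulated $\frac{\epsilon_i^{\bullet}}{\sqrt{\theta}}\|x-z\|$ terms into the single $\sqrt{\theta}\|x-z\|$ appearing in the KKT relation. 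Everything else is a transcription of the proof of Theorem~\ref{Sufficient-Theorem-I}.
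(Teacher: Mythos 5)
Your proposal is correct and follows essentially the same route as the paper's own proof: unpack the KKT inclusion, negate the conclusion to obtain the component-wise inequalities \eqref{equ:8-n}, apply the contrapositives of the implications in \eqref{equ:3-n} and sum using $\sum_{i\in I}(\epsilon_i^L+\epsilon_i^U)=\theta$ to recover \eqref{equation-6-n}, and then reuse the tail of the proof of Theorem~\ref{Sufficient-Theorem-I}. Your closing remark on propagating strictness through the sum is exactly the point the paper handles (somewhat more tersely) in the same way.
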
	
\begin{proof} Since $(z, \lambda)$ is a KKT pair up to $\mathcal{E}$ of \eqref{problem-R},  there exist $z_i^{*L}\in\partial f_i^L(z)$,  $z_i^{*U}\in\partial f_i^U(z)$, $i\in I$,  $\lambda_{jk}\geq 0$, $x^*_{jk}\in\partial_x g_j(z, v_{jk})$, $v_{jk}\in\mathcal{V}_j(z)$, $k=1, \ldots, k_j$, $k_j\in\mathbb{N}$, $\sum_{k=1}^{k_j}\lambda_{jk}=1$,    $b^*\in  \mathbb{B}$, and $\omega^* \in N(z; S)$ such that 
\begin{equation*} 
	\sum_{i\in I}( z_i^{*L}+  z_i^{*U})+\sum_{j\in J} \sum_{k=1}^{k_j}\lambda_j\lambda_{jk}x^*_{jk} +\sqrt{\theta}b^*=-\omega^*.
\end{equation*}
Suppose on the contrary that $z\notin \mathcal{E}$-$\mathcal{S}_{a}^{q}\eqref{problem-R}$, then there exists a point $x\in \Omega$ such that
\begin{equation*}
f_i(x)\leq_{LU} f_i(z) -\frac{\mathcal{E}_i}{\sqrt{\theta}}\|x-z\|,\ \ \forall i\in I,
\end{equation*}
where at least one of the inequalities is strict, or, equivalently,
\begin{equation}\label{equ:8-n}
f^L_i(x)\leq f^L_i(z)-\frac{\epsilon^U_i}{\sqrt{\theta}}\|x-z\| \ \ \text{and}\ \ f^U_i(x)\leq f^U_i(z)-\frac{\epsilon^L_i}{\sqrt{\theta}}\|x-z\|, \ \ \forall i\in I,
\end{equation}
with at least one strict  inequality. By the $\mathcal{E}$-pseudo-quasi generalized  convexity of $(f, g)$  at $z\in S$, there exists $\omega\in [N(z; S)]^\circ$ satisfying \eqref{equ:3-n}. By \eqref{equ:3-n} and \eqref{equ:8-n}, one has
\begin{equation*}%\label{equ:9-n}
	\langle z_i^{*L}, \omega\rangle + \frac{\epsilon_i^U}{\sqrt{\theta}}\|x-z\|\leq  0 \ \ \text{and}\ \ \langle z_i^{*U}, \omega\rangle + \frac{\epsilon_i^L}{\sqrt{\theta}}\|x-z\|\leq  0, \ \ \forall i\in I,
\end{equation*}
with at least one strict inequality. Hence,
\begin{equation*}
\sum_{i\in I}[\langle z_i^{*L}, \omega\rangle+\langle z_i^{*U}, \omega\rangle] + \sqrt{\theta}\|x-z\|< 0.	
\end{equation*}
By the same argument after \eqref{equation-6-n}  in the proof of Theorem \ref{Sufficient-Theorem-I}, we arrive at a contradiction. 
\end{proof}
We close this section with an illustrated example.
 \begin{example}\rm Let $f=(f_1,f_2)$ where
		$f_1(x)=[x,x+2]$,
		$f_2(x)=[x,x+1], \forall x\in \mathbb{R}$. Consider the problem~\eqref{problem-R} with $S=\{x\in\mathbb{R}:x\ge 0\}$, and $g(x,v)=-x+v-1, v\in[0,2]$. Then, we have
		$g(x)=-x+1\le 0$ and $\Omega=\{x\in \mathbb{R}: x \ge 1\}.$
		Put $\mathcal{E}_1=[0,\frac{1}{2}], \mathcal{E}_2=[0,\frac{1}{2}]$. So $\theta=1$ and 
		$ \Omega_\varepsilon=\{x\in\mathbb{R}:x\ge 0\}.$ Let $z=0$. By  a simple calculation, one has
		\begin{align*}
			&\partial f_1^L(z)=\{1\}, \partial f_1^U(z)=\{1\},\\
			&\partial f_2^L(z)=\{1\}, \partial f_2^U(z)=\{1\},\\
			&\partial_x g(z,v)=\{-1\}, \ N(z;S)=(-\infty, 0].
		\end{align*}
        It is not difficult to show that $(z,\lambda) \in \Omega_{\mathcal{E}}\times \mathbb{R}_+$, where $\lambda=4$, is a KKT pair up to $\mathcal{E}$ of~\eqref{problem-R}. We now check the $\mathcal{E}$-pseudo-quasi generalized convexity on $S$ at $z$ of $(f,g)$. By choosing $w=(0,0)\in [N(z;S)]^\circ $, then it is easy to see that~\eqref{equ:3} are satisfied. 
        Thus, all the assumptions of Theorem~\ref{Sufficient-Theorem-II} are fulfilled.
        
		       Consider problem~\eqref{problem-S}, where $\varphi(x)=4x+1$ and $\Omega=\{x\in\mathbb{R}: x\ge 1\}$. It is clear that $z=0$ is an almost $\theta$-quasi solution of~\eqref{problem-S} as
		$$\varphi(z)\le \varphi(x)+\sqrt{\theta}\|x-z\|\ \forall x\in\Omega.$$
	 Therefore, according to Lemma~\ref{lemma-3.2} we conclude that $z$ is an almost $\mathcal{E}$-quasi Pareto solution~\eqref{problem-R}.
 \end{example}
%===============================================================================
%===============================================================================
%Optimality conditions==========================================================
%===============================================================================
\section{Approximate duality theorems of the Wolfe-type}\label{Duality-Relations}
For $y\in\mathbb{R}^n$ and $\lambda\in \mathbb{R}^p_+$, put
\begin{align*}
L(y, \lambda):=(L_1(y, \lambda), \ldots, L_m(y,\lambda))=\left(\left[L^L_1(y, \lambda), L^U_1(y, \lambda)\right], \ldots, \left[L^L_m(y, \lambda), L^U_m(y, \lambda)\right]\right),
\end{align*}
where 
\begin{align*}
L^L_i(y, \lambda):=f^L_i(y)+\frac{1}{2m}\sum_{j\in J}\lambda_jg_j(y), \ L^U_i(y, \lambda):=f^U_i(y)+\frac{1}{2m}\sum_{j\in J}\lambda_jg_j(y), \ \ j\in J.
\end{align*} 

By Theorem \ref{Existence-Theorem}, for a given $\mathcal{E}$, there exists a KKT pair up to $\mathcal{E}$, denoted by $(z_\mathcal{E},\lambda_\mathcal{E})$,  of~\eqref{problem-R}.     We now consider the following dual problem of the Wolfe-type for \eqref{problem-R} with respect to $(z_\mathcal{E},\lambda_\mathcal{E})$:
\begin{align}
\label{Dual-problem}\tag{RMD} 
 LU-&\mathrm{Max}\ \  L(y, \lambda) 
\\
&\text{subject to}\ \ (y, \lambda)\in\Omega_{D},\notag
\end{align}
where the feasible set is defined by
\begin{align*}
\Omega_{D}:=\Big\{(y, \lambda)\in &S\times\mathbb{R}^p_+\,:\, 0\in \sum_{i\in I}[\partial f_i^L(y)+\partial f_i^U(y)]+\sum_{j\in J}\lambda_j\mathrm{co}\,\{\partial_xg_j(y, v_j)\,:\, v_j\in \mathcal{V}_j(y)\}+
\\
&\ \ \ \ \  \  \ \ \ N(y; S)+\sqrt{\theta}\mathbb{B},
\lambda_j\leq \lambda_{j\mathcal{E}}, \ \ j\in J(\mathcal{E}):=\big\{j\in J: 0<g_j(z_\mathcal{E})\leq \sqrt{\theta}\big\}\Big\}
\end{align*} 
with $\lambda_\mathcal{E}:=(\lambda_{1\mathcal{E}}, \ldots, \lambda_{p\mathcal{E}})$.

It should be noticed that approximate  quasi Pareto solutions of the dual problem~\eqref{Dual-problem}  are defined similarly as in Definitions~\ref{Defi-solution-n1} and~\ref{Definition-2}  by replacing the relations $\leq_{LU}$ and $<_{LU}$ by  $\geq_{LU}$ and $>_{LU}$,  respectively. For example, we say that $(\bar y, \bar  \lambda)\in\Omega_{D}$ is an {\em  $\mathcal{E}$-quasi Pareto solution} of~\eqref{Dual-problem} if there is no $(y, \lambda)\in \Omega_{D}$ such that    
\begin{equation*}
L_i(y, \lambda)-\frac{\mathcal{E}_i}{\sqrt{\theta}}\|y-\bar y\|\geq_{LU} L_i(\bar y, \bar \lambda),\ \ \forall i\in I,
\end{equation*}
where at least one of the inequalities is strict.

The following theorem describes  duality relations for approximate quasi Pareto solutions between the primal problem \eqref{problem-R} and the dual one \eqref{Dual-problem}.
\begin{theorem}[$\mathcal{E}$-duality]\label{E-dual} Let $(z_\mathcal{E},\lambda_\mathcal{E})$ be a KKT pair up to $\mathcal{E}$ of \eqref{problem-R} and consider the dual problem \eqref{Dual-problem} with respect to $(z_\mathcal{E},\lambda_\mathcal{E})$. If $(f, g)$ is generalized convex on $S$ at every $y\in S$, then $(z_\mathcal{E},\lambda_\mathcal{E})$ is an $\mathcal{E}$-quasi  Pareto solution of \eqref{Dual-problem}.
\end{theorem}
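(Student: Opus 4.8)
The plan is to argue by contradiction, compressing the interval-valued quasi-domination in \eqref{Dual-problem} into a single scalar inequality and then colliding it with the scalar inequality produced by evaluating the generalized convexity hypothesis at the dual point. The skeleton is the classical Wolfe-duality pairing of the dual-feasibility ``subgradients sum to zero'' relation against the descent direction $\omega$ supplied by generalized convexity.

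First I would observe that $(z_\mathcal{E},\lambda_\mathcal{E})$ itself lies in $\Omega_D$: the inclusion defining $\Omega_D$ is exactly the KKT relation \eqref{equation-8-n} satisfied by $(z_\mathcal{E},\lambda_\mathcal{E})$, and the requirement $\lambda_j\le\lambda_{j\mathcal{E}}$ for $j\in J(\mathcal{E})$ holds with equality; so the statement is meaningful. Now suppose $(z_\mathcal{E},\lambda_\mathcal{E})$ is not $\mathcal{E}$-quasi Pareto for \eqref{Dual-problem}. Then there is $(y,\lambda)\in\Omega_D$ with
\[
L_i(y,\lambda)-\tfrac{\mathcal{E}_i}{\sqrt\theta}\|y-z_\mathcal{E}\|\ \geq_{LU}\ L_i(z_\mathcal{E},\lambda_\mathcal{E}),\qquad i\in I,
\]
at least one of them holding strictly. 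Writing this through the lower/upper endpoints of $L_i$, using $L_i^L(y,\lambda)=f_i^L(y)+\tfrac1{2m}\sum_{j\in J}\lambda_jg_j(y)$ and $L_i^U(y,\lambda)=f_i^U(y)+\tfrac1{2m}\sum_{j\in J}\lambda_jg_j(y)$, and adding the resulting $2m$ scalar inequalities, the $\tfrac1{2m}$ weights assemble the multiplier term into $\sum_{j\in J}\lambda_jg_j(\cdot)$ while $\sum_{i\in I}(\epsilon_i^L+\epsilon_i^U)=\theta$ converts the perturbation into $\sqrt\theta\|y-z_\mathcal{E}\|$, so that, with $\varphi$ as in \eqref{problem-S},
\[
\varphi(y)+\sum_{j\in J}\lambda_jg_j(y)-\sqrt\theta\|y-z_\mathcal{E}\|\ >\ \varphi(z_\mathcal{E})+\sum_{j\in J}\lambda_{j\mathcal{E}}g_j(z_\mathcal{E}).
\]

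Next, feasibility of $(y,\lambda)$ in $\Omega_D$, after expanding the convex hull via Carathéodory, provides $y_i^{*L}\in\partial f_i^L(y)$, $y_i^{*U}\in\partial f_i^U(y)$, multipliers $\lambda_{jk}\ge0$ with $\sum_k\lambda_{jk}=1$, $x_{jk}^*\in\partial_xg_j(y,v_{jk})$ with $v_{jk}\in\mathcal{V}_j(y)$, $b^*\in\mathbb{B}$ and $\omega^*\in N(y;S)$ whose combination vanishes. Applying the generalized convexity of $(f,g)$ at $y$ with test point $x=z_\mathcal{E}$ (note $z_\mathcal{E}\in\Omega_\mathcal{E}\subset S$) yields a single $\omega\in[N(y;S)]^\circ$ dominating each of these subgradients by the corresponding increments of $f_i^L$, $f_i^U$, $g_j(\cdot,v_{jk})$, together with $\langle b^*,\omega\rangle\le\|z_\mathcal{E}-y\|$. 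Pairing the vanishing combination with $\omega$, using $\langle\omega^*,\omega\rangle\le0$, then using $g_j(y,v_{jk})=g_j(y)$ and $g_j(z_\mathcal{E},v_{jk})\le g_j(z_\mathcal{E})$ for the active indices $v_{jk}\in\mathcal{V}_j(y)$ together with $\lambda_j,\lambda_{jk}\ge0$ and $\sum_k\lambda_{jk}=1$, one arrives at
\[
0\ \le\ \varphi(z_\mathcal{E})-\varphi(y)+\sum_{j\in J}\lambda_j\big(g_j(z_\mathcal{E})-g_j(y)\big)+\sqrt\theta\|z_\mathcal{E}-y\|.
\]
Adding this to the previous display cancels $\varphi(y)$, $\varphi(z_\mathcal{E})$ and the common quantity $\sum_{j}\lambda_jg_j(y)-\sqrt\theta\|y-z_\mathcal{E}\|$, leaving $\sum_{j\in J}\lambda_{j\mathcal{E}}g_j(z_\mathcal{E})<\sum_{j\in J}\lambda_jg_j(z_\mathcal{E})$.

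Finally I would rule this out by a complementarity/sign argument. Since $z_\mathcal{E}\in\Omega_\mathcal{E}$, $g_j(z_\mathcal{E})\le\sqrt\theta$ for every $j$; hence $j\notin J(\mathcal{E})$ forces $g_j(z_\mathcal{E})\le0$, so $\lambda_{j\mathcal{E}}=0$ by \eqref{equation-5-n} and $\lambda_jg_j(z_\mathcal{E})\le0=\lambda_{j\mathcal{E}}g_j(z_\mathcal{E})$, while for $j\in J(\mathcal{E})$ one has $g_j(z_\mathcal{E})>0$ and $\lambda_j\le\lambda_{j\mathcal{E}}$, whence again $\lambda_jg_j(z_\mathcal{E})\le\lambda_{j\mathcal{E}}g_j(z_\mathcal{E})$. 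Summing over $j$ gives $\sum_j\lambda_jg_j(z_\mathcal{E})\le\sum_j\lambda_{j\mathcal{E}}g_j(z_\mathcal{E})$, contradicting the strict inequality just obtained; therefore $(z_\mathcal{E},\lambda_\mathcal{E})$ is an $\mathcal{E}$-quasi Pareto solution of \eqref{Dual-problem}. I expect the only genuinely delicate points to be the bookkeeping of the $\tfrac1{2m}$ weights in $L$ when summing the $2m$ endpoint inequalities, and the careful reduction of the $\mathcal{V}_j(y)$-indexed subgradient inequalities back to the max-functions $g_j$ (using $g_j(y,v_{jk})=g_j(y)$ and $g_j(z_\mathcal{E},v_{jk})\le g_j(z_\mathcal{E})$); the remainder is the standard Wolfe-type pairing.
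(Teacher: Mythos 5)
Your proposal is correct and follows essentially the same route as the paper's proof: sum the $2m$ endpoint inequalities into a single scalar comparison, pair the dual-feasibility relation against the $\omega$ from generalized convexity at $y$ with test point $z_\mathcal{E}$, and close with the sign/complementarity argument using $\lambda_j\le\lambda_{j\mathcal{E}}$ on $J(\mathcal{E})$ and $\lambda_{j\mathcal{E}}=0$, $g_j(z_\mathcal{E})\le 0$ off it. Your final bookkeeping (reducing everything to $\sum_j\lambda_jg_j(z_\mathcal{E})>\sum_j\lambda_{j\mathcal{E}}g_j(z_\mathcal{E})$ and then the explicit case split over $j\in J(\mathcal{E})$ versus $j\notin J(\mathcal{E})$) is a slightly cleaner and more explicit rearrangement of the same chain of inequalities the paper writes out.
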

\begin{proof} On the contrary, suppose that  $(z_\mathcal{E},\lambda_\mathcal{E})$ is not an $\mathcal{E}$-quasi  Pareto solution of~\eqref{Dual-problem}, then there exists $(y, \lambda)\in\Omega_{D}$ such that
\begin{equation*}
	L_i(y, \lambda)-\frac{\mathcal{E}_i}{\sqrt{\theta}}\|y-z_\mathcal{E}\|\geq_{LU} L_i(z_\mathcal{E},\lambda_\mathcal{E}),\ \ \forall i\in I,
\end{equation*}
where at least one of the inequalities is strict, or, equivalently,
\begin{align*}
f^L_i(y)+\frac{1}{2m}\sum_{j\in J}\lambda_jg_j(y)-\frac{\mathcal{E}^U_i}{\sqrt{\theta}}\|y-z_\mathcal{E}\|&\geq f^L_i(z_\mathcal{E})+\frac{1}{2m}\sum_{j\in J}\lambda_{j\mathcal{E}}g_j(z_\mathcal{E}),
\\
f^U_i(y)+\frac{1}{2m}\sum_{j\in J}\lambda_jg_j(y)-\frac{\mathcal{E}^L_i}{\sqrt{\theta}}\|y-z_\mathcal{E}\|&\geq f^U_i(z_\mathcal{E})+\frac{1}{2m}\sum_{j\in J}\lambda_{j\mathcal{E}}g_j(z_\mathcal{E}), \ \ \forall i\in I,
\end{align*}	
with at least one strict  inequality. Hence
\begin{equation}\label{equa:11-n}
\sum_{i\in I}[f^L_i(y)+f^U_i(y)]+\sum_{j\in J}\lambda_jg_j(y)-\sqrt{\theta}\|y-z_\mathcal{E}\|> \sum_{i\in I}[f^L_i(z_\mathcal{E})+f^U_i(z_\mathcal{E})]+\sum_{j\in J}\lambda_{j\mathcal{E}}g_j(z_\mathcal{E}).
\end{equation}
Since $(y, \lambda)\in \Omega_{D}$, there exist $z_i^{*L}\in\partial f_i^L(y)$,  $z_i^{*U}\in\partial f_i^U(y)$, $i\in I$,  $\lambda_{jk}\geq 0$, $x^*_{jk}\in\partial_x g_j(y, v_{jk})$, $v_{jk}\in\mathcal{V}_j(y)$, $k=1, \ldots, k_j$, $k_j\in\mathbb{N}$, $\sum_{k=1}^{k_j}\lambda_{jk}=1$,    $b^*\in \mathbb{B}$, and $\omega^* \in N(y; S)$ such that 
\begin{align} 
	&\sum_{i\in I}( z_i^{*L}+  z_i^{*U})+\sum_{j\in J} \sum_{k=1}^{k_j}\lambda_j\lambda_{jk}x^*_{jk} +\sqrt{\theta}b^*+\omega^*=0,\label{equa:8-n}
	\\
	& \lambda_j\leq \lambda_{j\mathcal{E}}, \ \ j\in J(\mathcal{E}).\label{equa:9-n}
\end{align}
By the generalized convexity of $(f, g)$ at $y$, there exists $\omega\in [N(y; S)]^\circ$ such that
\begin{equation}\label{equa:10-n} 
	\begin{split}
		&f_i^L(z_\mathcal{E}) -  f_i^L(y)\geq \langle z_i^{*L}, \omega\rangle, \ \ \forall i\in I,
		\\
		&f_i^U(z_\mathcal{E}) -  f_i^U(y)\geq \langle z_i^{*U}, \omega\rangle, \ \ \forall i\in I,
		\\
		&g_j(z_\mathcal{E}, v_{jk})- g_j(y, v_{jk})\geq  \langle x^*_{jk}, \omega\rangle,\ \   v_{jk}\in \mathcal{V}_j(y), j\in J,\ \ \text{and}\ \ \\
		& \langle b^*,\omega\rangle\leq \|z_\mathcal{E}-y\|,\ \ \forall b^*\in \mathbb{B}.
	\end{split}
\end{equation}
It follows from ~\eqref{equa:11-n},~\eqref{equa:8-n}, and~\eqref{equa:10-n}  that
\begin{align}
0&\leq \langle-\omega^*, \omega\rangle=\sum_{i\in I}[ \langle z_i^{*L}, \omega\rangle+  \langle z_i^{*U}, \omega\rangle]+\sum_{j\in J} \sum_{k=1}^{k_j}\lambda_j\lambda_{jk}\langle x^*_{jk}, \omega\rangle +\sqrt{\theta}\langle b^*, \omega\rangle\notag
\\
&\leq \sum_{i\in I}[f_i^L(z_\mathcal{E})+f_i^U(z_\mathcal{E})]-\sum_{i\in I}[f^L_i(y)+f^U_i(y)]+\sum_{j\in J}\sum_{k=1}^{k_j}\lambda_j\lambda_{jk}[g_j(z_\mathcal{E}, v_{jk})- g_j(y, v_{jk})]\notag
\\
&\ \ \ \ \ \ \ \ \ \ \ \ \ \ \ \ \ \ \ \ \ \ \ \ \ \ \ \ \ \ \ \ \ \ \ \ \ \ \ \ \ \ \ \ \ \ \ \ \ \ \ \ \ \ \ \ \ \ \ \ \ \ \ \ \ \ \ \ \ \ \ \ \ \ \ \ \ \ +\sqrt{\theta}\|z_\mathcal{E}-y\|\notag
\\
&<\sum_{j\in J}\lambda_jg_j(y)-\sum_{j\in J}\lambda_{j\mathcal{E}}g_j(z_\mathcal{E})+\sum_{j\in J}\sum_{k=1}^{k_j}\lambda_j\lambda_{jk}[g_j(z_\mathcal{E}, v_{jk})- g_j(y, v_{jk})]\notag
\\
&\leq\sum_{j\in J}\lambda_j\bigg[g_j(y)-\sum_{k=1}^{k_j}\lambda_{jk}g_j(y, v_{jk})\bigg]\!+\!\sum_{j\in J} \sum_{k=1}^{k_j}\lambda_{j\mathcal{E}}\lambda_{jk}g_j(z_\mathcal{E})\!-\!\sum_{j\in J}\lambda_{j\mathcal{E}}g_j(z_\mathcal{E}),\label{equa:13-n}
\end{align}
where the last inequality comes from \eqref{equa:9-n} and the fact that $g_j(z_\mathcal{E}, v_{j})\leq g_j(z_\mathcal{E})$ for all $v_{j}\in 
\mathcal{V}_j$ and $j\in J$. Since $v_{jk}\in \mathcal{V}_j(y)$ and $\sum_{k=1}^{k_j}\lambda_{jk}=1$, one has 
\begin{equation}\label{equa:14-n}
g_j(y)-\sum_{k=1}^{k_j}\lambda_{jk}g_j(y, v_{jk})=g_j(y)-\sum_{k=1}^{k_j}\lambda_{jk}g_j(y)=0,\ \ \forall j\in J.	
\end{equation} 
Clearly, 
\begin{equation}\label{equa:12-n}
\sum_{j\in J} \sum_{k=1}^{k_j}\lambda_{j\mathcal{E}}\lambda_{jk}g_j(z_\mathcal{E})-\sum_{j\in J}\lambda_{j\mathcal{E}}g_j(z_\mathcal{E})=\sum_{j\in J}\lambda_{j\mathcal{E}}g_j(z_\mathcal{E})\bigg( \sum_{k=1}^{k_j}\lambda_{jk}\bigg)-\sum_{j\in J}\lambda_{j\mathcal{E}}g_j(z_\mathcal{E})=0.
\end{equation}
Now, combining \eqref{equa:12-n}, \eqref{equa:14-n} and \eqref{equa:13-n}, we arrive at a contradiction. The proof is complete.
\end{proof}
The following example shows that the generalized convexity of $(f, g)$ in Theorem~\ref{E-dual} cannot be removed.
 \begin{example}\rm Let $f_1$, $f_2$, $S$, and $g$ be as in Example~\ref{ex3.3}.  It has been shown in Example~\ref{ex3.3} that $(f, g)$ is not generalized convex on $S$ at $z =0$. 
		Put $\mathcal{E}_1=[0,\frac{1}{2}], \mathcal{E}_2=[0,\frac{1}{2}]$. From our data, one has $g(x)=-x^2+1\le 0 $, and hence $\Omega=\{x\in S:x\ge 1\}.$
  
        We now consider the dual problem of~\eqref{problem-R} with respect to $(z_\mathcal{E},\lambda_\mathcal{E})$
	where the feasible set is defined by
	\begin{align*}
		\Omega_{D}:=\Big\{(y, \lambda)\in S\times\mathbb{R}^+\,:\, 0\in \sum_{i\in \{1,2\}}[\partial f_i^L(y)+\partial f_i^U(y)]+\lambda \mathrm{co}\,&\{\partial_xg(y, v)\,:\, v\in [0,1]\}
		\\
		&\ \ \ \ \ \ \ \ \  +N(y; S)+\sqrt{\theta}\mathbb{B}\Big\}
	\end{align*} 
    and the objective function
\begin{align*}
		L(y, \lambda):=(L_1(y, \lambda),L_2(y,\lambda))=\left(\left[L^L_1(y, \lambda), L^U_1(y, \lambda)\right], \left[L^L_2(y, \lambda), L^U_2(y, \lambda)\right]\right),
	\end{align*}
	where 
	\begin{align*}
		&L^L_1(y, \lambda):=y+\frac{1}{4}\lambda(-y^2+1), L^U_1(y, \lambda):=y+2+\frac{1}{4}\lambda(-y^2+1)\\
        &L^L_2(y, \lambda):=y+\frac{1}{4}\lambda(-y^2+1), L^U_2(y, \lambda):=y+1+\frac{1}{4}\lambda(-y^2+1).
	\end{align*} 
     Choose $ z_\mathcal{E}=\frac{1}{4},\lambda_\mathcal{E}=8$. It is obvious that
		\begin{align*}
			&\partial f_1^L(z_\mathcal{E})=\{1\}, \ \partial f_1^U(z_\mathcal{E})=\{1\},\\
			&\partial f_2^L(z_\mathcal{E})=\{1\}, \ \partial f_2^U(z_\mathcal{E})=\{1\},\\
			&\partial g(z_\mathcal{E})=\left\{-\frac{1}{2}\right\}.
		\end{align*}
		Hence, it is not hard to check that $ z_\mathcal{E}=\frac{1}{4},\lambda_\mathcal{E}=8$  is a KKT pair up to $\mathcal{E}$ of~\eqref{problem-R}.
        Moreover, in this case, $ z_\mathcal{E}=\frac{1}{4},\lambda_\mathcal{E}=8$ is not an $\mathcal{E}$-quasi Pareto solution of~\eqref{Dual-problem}. Indeed, suppose on the contrary that  $(z_\mathcal{E},\lambda_\mathcal{E})$ is an $\mathcal{E}$-quasi  Pareto solution of \eqref{Dual-problem}, then there is no $(y, \lambda)\in\Omega_{D}$ such that
		\begin{equation*}
			L_i(y, \lambda)-\frac{\mathcal{E}_i}{\sqrt{\theta}}\|y-z_\mathcal{E}\|\geq_{LU} L_i(z_\mathcal{E},\lambda_\mathcal{E}),\ \ \forall i\in \{1,2\},
		\end{equation*}
		where at least one of the inequalities is strict, or, equivalently,
		\begin{equation}\label{ct_n}
        \begin{split}
			f^L_i(y)+\frac{1}{4}\lambda g(y)-\frac{\mathcal{E}^U_i}{\sqrt{\theta}}\|y-z_\mathcal{E}\|&\geq f^L_i(z_\mathcal{E})+\frac{1}{4}\lambda_{\mathcal{E}} g (z_\mathcal{E}),
			\\
			f^U_i(y)+\frac{1}{4}\lambda g(y)-\frac{\mathcal{E}^L_i}{\sqrt{\theta}}\|y-z_\mathcal{E}\|&\geq f^U_i(z_\mathcal{E})+\frac{1}{4}\lambda_{\mathcal{E}}g_j(z_\mathcal{E}), \ \ \forall i\in \{1,2\},
		\end{split}	
        		\end{equation}
		with at least one strict  inequality. By the direct computation,  we have $L_1\left (\frac{1}{4},8\right)= \left[\frac{17}{8},\frac{33}{8}\right] $, $L_2\left(\frac{1}{4},8\right)= \left[\frac{17}{8},\frac{25}{8}\right]$, and we can find $ (y,\lambda)=(\frac{1}{8},16)\in \Omega_{D} $ such that~\eqref{ct_n} is satisfied, which is a contradiction. Therefore $(z_\mathcal{E},\lambda_\mathcal{E})=(\frac{1}{4},8) \in\Omega_{D}$ is not an $\mathcal{E}$-quasi  Pareto solution of~\eqref{Dual-problem} 
 	
 \end{example}
In the following, we establish the converse-like duality relation between approximate quasi Pareto solutions of the primal problem~\eqref{problem-R} and its corresponding dual problem~\eqref{Dual-problem}.
 \begin{theorem}[$\mathcal{E}$-converse like duality]\label{CE-dual} Let $(z_\mathcal{E},\lambda_\mathcal{E})$ be a KKT pair up to $\mathcal{E}$ of~\eqref{problem-R} and consider the dual problem~\eqref{Dual-problem} with respect to $(z_\mathcal{E},\lambda_\mathcal{E})$. Assume that $(y, \lambda)$ is a feasible solution of~\eqref{Dual-problem}.  If $(f, g)$ is generalized convex on $S$ at $y$ and the following condition~holds
\begin{equation}\label{equa:15-n}
g_j(x)\leq g_j(y), \ \ \forall x\in\Omega, j\in J,  
\end{equation} 	
then there is no $x\in\Omega$ such that
\begin{equation}\label{equa:16-n}
f_i(x)\leq_{LU} f_i(y) -\frac{\mathcal{E}_i}{\sqrt{\theta}}\|x-y\|,\ \ \forall i\in I,
\end{equation}
with at least one strict inequality.
 \end{theorem}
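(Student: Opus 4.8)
The plan is to argue by contradiction, mirroring the proof of Theorem~\ref{E-dual} with the dual point there replaced by a primal point $x$. Suppose there were $x\in\Omega$ satisfying \eqref{equa:16-n} with at least one strict inequality. Decomposing the relation $\leq_{LU}$ as in Definition~\ref{dn2.3}, this means
\begin{equation*}
f_i^L(x)\leq f_i^L(y)-\frac{\epsilon_i^U}{\sqrt{\theta}}\|x-y\|\ \ \text{and}\ \ f_i^U(x)\leq f_i^U(y)-\frac{\epsilon_i^L}{\sqrt{\theta}}\|x-y\|,\ \ \forall i\in I,
\end{equation*}
with at least one of these $2m$ inequalities strict. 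Adding them all and using $\sum_{i\in I}(\epsilon_i^L+\epsilon_i^U)=\theta$, one gets the strict inequality
\begin{equation*}
\sum_{i\in I}[f_i^L(x)+f_i^U(x)]+\sqrt{\theta}\|x-y\|<\sum_{i\in I}[f_i^L(y)+f_i^U(y)].
\end{equation*}

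Next I would use that $(y,\lambda)$ is feasible for \eqref{Dual-problem}. Expanding the convex hulls in the defining inclusion exactly as in the proof of Theorem~\ref{E-dual} yields $z_i^{*L}\in\partial f_i^L(y)$, $z_i^{*U}\in\partial f_i^U(y)$, multipliers $\lambda_{jk}\geq 0$ with $\sum_{k=1}^{k_j}\lambda_{jk}=1$, elements $x_{jk}^*\in\partial_x g_j(y,v_{jk})$ with $v_{jk}\in\mathcal{V}_j(y)$, some $b^*\in\mathbb{B}$ and $\omega^*\in N(y;S)$ such that $\sum_{i\in I}(z_i^{*L}+z_i^{*U})+\sum_{j\in J}\sum_{k=1}^{k_j}\lambda_j\lambda_{jk}x_{jk}^*+\sqrt{\theta}b^*+\omega^*=0$. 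Applying the generalized convexity of $(f,g)$ at $y$ to the point $x\in S$ produces $\omega\in[N(y;S)]^\circ$ with $f_i^L(x)-f_i^L(y)\geq\langle z_i^{*L},\omega\rangle$, $f_i^U(x)-f_i^U(y)\geq\langle z_i^{*U},\omega\rangle$, $g_j(x,v_{jk})-g_j(y,v_{jk})\geq\langle x_{jk}^*,\omega\rangle$ and $\langle b^*,\omega\rangle\leq\|x-y\|$. Since $\langle\omega^*,\omega\rangle\leq 0$, pairing the displayed identity with $\omega$ and substituting these estimates gives
\begin{align*}
0\leq\langle-\omega^*,\omega\rangle&\leq\sum_{i\in I}[f_i^L(x)+f_i^U(x)]-\sum_{i\in I}[f_i^L(y)+f_i^U(y)]\\
&\quad+\sum_{j\in J}\sum_{k=1}^{k_j}\lambda_j\lambda_{jk}[g_j(x,v_{jk})-g_j(y,v_{jk})]+\sqrt{\theta}\|x-y\|.
\end{align*}

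It then remains to dispose of the constraint double sum, and this is the only place where hypothesis~\eqref{equa:15-n} is needed: since $v_{jk}\in\mathcal{V}_j(y)$ we have $g_j(y,v_{jk})=g_j(y)$, while $g_j(x,v_{jk})\leq g_j(x)\leq g_j(y)$ by \eqref{equa:15-n}, so each term $g_j(x,v_{jk})-g_j(y,v_{jk})\leq 0$; as $\lambda_j\geq 0$ and $\lambda_{jk}\geq 0$, the whole double sum is $\leq 0$. Hence $0\leq\sum_{i\in I}[f_i^L(x)+f_i^U(x)]-\sum_{i\in I}[f_i^L(y)+f_i^U(y)]+\sqrt{\theta}\|x-y\|$, which contradicts the strict inequality obtained in the first step. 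I expect no genuine difficulty here beyond the bookkeeping; the sole load-bearing step is the last one, where \eqref{equa:15-n} is exactly what converts the pointwise constraint differences into a nonpositive quantity — without this extra hypothesis those terms cannot be controlled and the contradiction collapses.
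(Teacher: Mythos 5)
Your proposal is correct and follows essentially the same route as the paper's proof: contradiction, summation of the componentwise $LU$-inequalities, expansion of the dual feasibility inclusion, application of generalized convexity at $y$ with the test point $x$, and use of \eqref{equa:15-n} together with $g_j(y,v_{jk})=g_j(y)$ for $v_{jk}\in\mathcal{V}_j(y)$ to kill the constraint double sum. The only (cosmetic) difference is the order in which the final chain of inequalities is assembled, and your $\sqrt{\theta}\|x-y\|$ term is the correct one where the paper's display has a slip writing $\|z_{\mathcal{E}}-y\|$.
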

\begin{proof}
Suppose on the contrary that there exists $x\in\Omega$ satisfying \eqref{equa:16-n} with at least one strict inequality, i.e., 
\begin{equation*}
f^L_i(x)\leq f^L_i(y) -\frac{\mathcal{E}^U_i}{\sqrt{\theta}}\|x-y\| \ \ \text{and} \ \ f^U_i(x)\leq f^U_i(y) -\frac{\mathcal{E}^L_i}{\sqrt{\theta}}\|x-y\|, \ \  \forall i\in I,
\end{equation*}  
where at least one of the inequalities is strict. Hence
\begin{equation}\label{equa:17-n}
\sum_{i\in I}[f_i^L(x)-f_i^L(y)]+	\sum_{i\in I}[f_i^U(x)-f_i^U(y)]+\sqrt{\theta}\|x-y\|<0.
\end{equation}
Since $(y, \lambda)\in \Omega_{D}$, there exist $z_i^{*L}\in\partial f_i^L(y)$,  $z_i^{*U}\in\partial f_i^U(y)$, $i\in I$,  $\lambda_{jk}\geq 0$, $x^*_{jk}\in\partial_x g_j(y, v_{jk})$, $v_{jk}\in\mathcal{V}_j(y)$, $k=1, \ldots, k_j$, $k_j\in\mathbb{N}$, $\sum_{k=1}^{k_j}\lambda_{jk}=1$,    $b^*\in \mathbb{B}$, and $\omega^* \in N(y; S)$ such that 
\begin{equation} \label{equa:8-n-1}
\sum_{i\in I}( z_i^{*L}+  z_i^{*U})+\sum_{j\in J} \sum_{k=1}^{k_j}\lambda_j\lambda_{jk}x^*_{jk} +\sqrt{\theta}b^*+\omega^*=0.
\end{equation}
From the generalized convexity of $(f, g)$ at $y$, one can find $\omega\in [N(y; S)]^\circ$ such that
\begin{equation}\label{equa:10-n-1} 
	\begin{split}
		&f_i^L(x) -  f_i^L(y)\geq \langle z_i^{*L}, \omega\rangle, \ \ \forall i\in I,
		\\
		&f_i^U(x) -  f_i^U(y)\geq \langle z_i^{*U}, \omega\rangle, \ \ \forall i\in I,
		\\
		&g_j(x, v_{jk})- g_j(y, v_{jk})\geq  \langle x^*_{jk}, \omega\rangle,\ \   v_{jk}\in \mathcal{V}_j(y), j\in J,\ \ \text{and}\ \ \\
		& \langle b^*,\omega\rangle\leq \|x-y\|,\ \ \forall b^*\in \mathbb{B}.
	\end{split}
\end{equation}
Combining \eqref{equa:10-n-1}, \eqref{equa:8-n-1}, \eqref{equa:17-n}, and \eqref{equa:15-n}, we get a contradiction
\begin{align*}
	0&\leq \langle-\omega^*, \omega\rangle=\sum_{i\in I}[ \langle z_i^{*L}, \omega\rangle+  \langle z_i^{*U}, \omega\rangle]+\sum_{j\in J} \sum_{k=1}^{k_j}\lambda_j\lambda_{jk}\langle x^*_{jk}, \omega\rangle +\sqrt{\theta}\langle b^*, \omega\rangle\notag
	\\
	&\leq \sum_{i\in I}[f_i^L(x)-f_i^L(y)]+\sum_{i\in I}[f^U_i(x)-f^U_i(y)]+\sqrt{\theta}\|z_\mathcal{E}-y\|\notag
	\\
	&\ \ \ \ \ \ \ \ \ \ \ \ \ \ \ \ \ \ \ \ \ \  \ \ \ \ \ \ \ \ \ \ \ \ +\sum_{j\in J}\sum_{k=1}^{k_j}\lambda_j\lambda_{jk}[g_j(x, v_{jk})- g_j(y, v_{jk})]\notag
	\\
	&<\sum_{j\in J}\sum_{k=1}^{k_j}\lambda_j\lambda_{jk}[g_j(x, v_{jk})- g_j(y, v_{jk})]\leq \sum_{j\in J}\sum_{k=1}^{k_j}\lambda_j\lambda_{jk}[g_j(x)- g_j(y, v_{jk})] \notag
	\\
	&\leq \sum_{j\in J}\sum_{k=1}^{k_j}\lambda_j\lambda_{jk}[g_j(y)- g_j(y, v_{jk})]=0,
\end{align*}
where the last equality comes from the fact that $g_j(y)= g_j(y, v_{jk})$ for all $v_{jk}\in \mathcal{V}_j(y)$,  $k=1, \ldots, k_j$, and $j\in J$. The proof is complete.
\end{proof}
%===============================================================================
\section{Quasi-$\mathcal{E}$ Pareto saddle point} \label{section5}
Let $\mathcal{A}:=(A_1, \ldots, A_m)$ and $\mathcal{B}:=(B_1, \ldots, B_m)$, where $A_i$, $B_i$, $i\in I$, are intervals in $\mathcal{K}_c$. In what follows,   we use the following notations for convenience.
\begin{align*}
	\mathcal{A}&>_{LU}\mathcal{B} \Leftrightarrow
	\begin{cases}
		A_i\geq_{LU} B_i, \ \ \forall i\in I,
		\\
		A_k>_{LU} B_k,  \ \ \text{for at least one}\ \ k\in I. 
	\end{cases}
	\\
	\mathcal{A}&\ngtr_{LU}\mathcal{B} \ \ \text{is the negation of}\ \ \mathcal{A}>_{LU}\mathcal{B}.
\end{align*}

We consider an $\mathcal{E}$-interval-valued vector Lagrangian by setting
\begin{equation*}
\mathcal{L}_{\mathcal{E}}(x, \lambda, y, \mu):=\left([F_1^L(x, \lambda, y, \mu), F_1^U(x, \lambda, y, \mu)], \ldots, [F_m^L(x, \lambda, y, \mu), F_m^U(x, \lambda, y, \mu)]\right),
\end{equation*}
\begin{align*}
&F_i^L(x, \lambda, y, \mu):=f^L_i(x)+\frac{1}{2m}\sum_{j\in J}\lambda_jg_j(x)+\frac{\mathcal{E}^L_i}{\sqrt{\theta}}\|x-y\|-\frac{\mathcal{E}^L_i}{\sqrt{\theta}}\|\lambda-\mu\|_1
\\
&F_i^U(x, \lambda, y, \mu):=f^U_i(x)+\frac{1}{2m}\sum_{j\in J}\lambda_jg_j(x)+\frac{\mathcal{E}^U_i}{\sqrt{\theta}}\|x-y\|-\frac{\mathcal{E}^U_i}{\sqrt{\theta}}\|\lambda-\mu\|_1, \ \ i\in I,
\end{align*}
where $(y,\mu)\in \mathbb{R}^n\times\mathbb{R}^p$ and 
$$\|\lambda-\mu\|_1:=\sum_{j\in J}|\lambda_j-\mu_j|.$$
\begin{definition}
\rm 
A point $(\bar x, \bar\lambda)\in\mathbb{R}\times\mathbb{R}^p_+$ is called a {\em quasi-$\mathcal{E}$ Pareto saddle point} of $\mathcal{L}_{\mathcal{E}}$ if the following conditions hold:
\begin{enumerate}[(i)]
	\item $\mathcal{L}_{\mathcal{E}}(\bar x, \lambda, \bar x, \bar \lambda)\ngtr_{LU} \mathcal{L}_{\mathcal{E}}(\bar x, \bar\lambda, \bar x, \bar \lambda)$ for all $\lambda\in \mathbb{R}^p_+$;
	\item   $\mathcal{L}_{\mathcal{E}}(\bar x, \bar\lambda, \bar x, \bar \lambda)\ngtr_{LU} \mathcal{L}_{\mathcal{E}}(x, \bar\lambda, \bar x, \bar \lambda)$ for all $x\in \mathbb{R}^n$.
\end{enumerate}
\end{definition}
\begin{theorem}\label{theo5.1} Let $(z_\mathcal{E},\lambda_\mathcal{E})$ be a KKT pair up to $\mathcal{E}$ of \eqref{problem-R}. If $(f, g)$ is generalized convex on $S$ at $z_\mathcal{E}$, then $(z_\mathcal{E},\lambda_\mathcal{E})$  is a quasi-$\mathcal{E}$ Pareto saddle point of $\mathcal{L}_{\mathcal{E}}$.
\end{theorem}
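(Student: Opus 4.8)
The plan is to verify the two saddle-point inequalities (i) and (ii) in the definition of a quasi-$\mathcal{E}$ Pareto saddle point directly, exploiting the special structure of $\mathcal{L}_{\mathcal{E}}$ when evaluated at $(y,\mu)=(z_\mathcal{E},\lambda_\mathcal{E})$. Note first that at such an argument the terms $\tfrac{\mathcal{E}^L_i}{\sqrt\theta}\|x-z_\mathcal{E}\|$ and $-\tfrac{\mathcal{E}^L_i}{\sqrt\theta}\|\lambda-\lambda_\mathcal{E}\|_1$ (and their $U$-analogues) are precisely what make the inequalities work: at $x=z_\mathcal{E}$ they vanish, and elsewhere they control the perturbation.

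\medskip

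For condition (i), I would fix $\lambda\in\mathbb{R}^p_+$ and compare $\mathcal{L}_{\mathcal{E}}(z_\mathcal{E},\lambda,z_\mathcal{E},\lambda_\mathcal{E})$ with $\mathcal{L}_{\mathcal{E}}(z_\mathcal{E},\lambda_\mathcal{E},z_\mathcal{E},\lambda_\mathcal{E})$. Here the $\|x-y\|$-terms vanish on both sides (since $x=y=z_\mathcal{E}$), so for each $i$ the difference $F_i^L(z_\mathcal{E},\lambda,z_\mathcal{E},\lambda_\mathcal{E})-F_i^L(z_\mathcal{E},\lambda_\mathcal{E},z_\mathcal{E},\lambda_\mathcal{E})$ reduces to $\tfrac{1}{2m}\sum_{j\in J}(\lambda_j-\lambda_{j\mathcal{E}})g_j(z_\mathcal{E})-\tfrac{\mathcal{E}^L_i}{\sqrt\theta}\|\lambda-\lambda_\mathcal{E}\|_1$, and similarly for $F_i^U$. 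To conclude $\mathcal{L}_{\mathcal{E}}(z_\mathcal{E},\lambda,z_\mathcal{E},\lambda_\mathcal{E})\ngtr_{LU}\mathcal{L}_{\mathcal{E}}(z_\mathcal{E},\lambda_\mathcal{E},z_\mathcal{E},\lambda_\mathcal{E})$ it suffices to show that at least one component of the difference is $\le 0$ — i.e.\ that we do \emph{not} have strict $>_{LU}$ in every coordinate. I would use the KKT relations \eqref{equation-9-n}--\eqref{equation-5-n}: when $g_j(z_\mathcal{E})\le 0$ the term $(\lambda_j-\lambda_{j\mathcal{E}})g_j(z_\mathcal{E})=\lambda_jg_j(z_\mathcal{E})\le 0$, and when $0<g_j(z_\mathcal{E})\le\sqrt\theta$ the term is $(\lambda_j-\lambda_{j\mathcal{E}})g_j(z_\mathcal{E})$; combined with the fact that $\tfrac{1}{2m}\sqrt\theta\le\tfrac{\epsilon^L_i+\epsilon^U_i}{\sqrt\theta}$-type estimates (using $\theta=\sum_i(\epsilon^L_i+\epsilon^U_i)$) one shows the penalty $-\tfrac{\mathcal{E}^L_i}{\sqrt\theta}\|\lambda-\lambda_\mathcal{E}\|_1$ dominates, so the difference is $\le 0$ in every coordinate, giving (i).

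\medskip

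For condition (ii), I would fix $x\in\mathbb{R}^n$ and compare $\mathcal{L}_{\mathcal{E}}(z_\mathcal{E},\lambda_\mathcal{E},z_\mathcal{E},\lambda_\mathcal{E})$ with $\mathcal{L}_{\mathcal{E}}(x,\lambda_\mathcal{E},z_\mathcal{E},\lambda_\mathcal{E})$: here the $\|\lambda-\mu\|_1$-terms vanish and what remains is to compare $\sum_i[f_i^L(z_\mathcal{E})+f_i^U(z_\mathcal{E})]+\tfrac{1}{m}\sum_j\lambda_{j\mathcal{E}}g_j(z_\mathcal{E})$ with $\sum_i[f_i^L(x)+f_i^U(x)]+\tfrac{1}{m}\sum_j\lambda_{j\mathcal{E}}g_j(x)+\sqrt\theta\|x-z_\mathcal{E}\|$. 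Suppose for contradiction that $\mathcal{L}_{\mathcal{E}}(z_\mathcal{E},\lambda_\mathcal{E},z_\mathcal{E},\lambda_\mathcal{E})>_{LU}\mathcal{L}_{\mathcal{E}}(x,\lambda_\mathcal{E},z_\mathcal{E},\lambda_\mathcal{E})$ in the vector sense; summing the $L$- and $U$-inequalities over $i$ yields a strict inequality $\sum_i[f_i^L(z_\mathcal{E})+f_i^U(z_\mathcal{E})]+\tfrac1m\sum_j\lambda_{j\mathcal{E}}g_j(z_\mathcal{E})>\sum_i[f_i^L(x)+f_i^U(x)]+\tfrac1m\sum_j\lambda_{j\mathcal{E}}g_j(x)+\sqrt\theta\|x-z_\mathcal{E}\|$. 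Then I would invoke the generalized convexity of $(f,g)$ at $z_\mathcal{E}$ together with the KKT inclusion \eqref{equation-8-n} at $z_\mathcal{E}$ (there exist subgradients $z_i^{*L},z_i^{*U}$, multipliers $\lambda_{jk}$, $x^*_{jk}\in\partial_xg_j(z_\mathcal{E},v_{jk})$, $b^*\in\mathbb{B}$, $\omega^*\in N(z_\mathcal{E};S)$ summing to zero), pair with $\omega\in[N(z_\mathcal{E};S)]^\circ$, and run exactly the chain of inequalities from the proof of Theorem~\ref{E-dual} — using $\langle-\omega^*,\omega\rangle\ge0$, the convexity estimates, $g_j(z_\mathcal{E},v_{jk})=g_j(z_\mathcal{E})$ for active indices, and $\langle b^*,\omega\rangle\le\|x-z_\mathcal{E}\|$ — to reach $0<0$, a contradiction. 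The main obstacle I anticipate is bookkeeping in part (i): one must handle the coefficient $\tfrac{1}{2m}$ against $\tfrac{1}{\sqrt\theta}$ carefully, splitting the index set $J$ according to the sign of $g_j(z_\mathcal{E})$ and using $\lambda_j\le\lambda_{j\mathcal{E}}$ is \emph{not} available here (unlike in the dual problem), so the argument must instead rely on the penalty term $-\tfrac{\mathcal{E}^L_i}{\sqrt\theta}\|\lambda-\lambda_\mathcal{E}\|_1$ absorbing the sign-indefinite contribution $(\lambda_j-\lambda_{j\mathcal{E}})g_j(z_\mathcal{E})$ for $j\in J(\mathcal{E})$, which is the only genuinely delicate estimate.
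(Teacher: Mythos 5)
Your treatment of condition (ii) is essentially the paper's: assume the relation $\mathcal{L}_{\mathcal E}(z_\mathcal E,\lambda_\mathcal E,z_\mathcal E,\lambda_\mathcal E)>_{LU}\mathcal{L}_{\mathcal E}(x,\lambda_\mathcal E,z_\mathcal E,\lambda_\mathcal E)$, sum the $2m$ scalar inequalities to get a strict aggregate inequality, and contradict it by pairing the KKT inclusion at $z_\mathcal E$ with the vector $\omega$ from generalized convexity, exactly as in the chain of Theorem~\ref{E-dual}. One bookkeeping slip there: summing $\tfrac{1}{2m}\sum_{j}\lambda_jg_j$ over all $2m$ inequalities produces the multiplier term with coefficient $1$, not $\tfrac1m$; this matters because the final cancellation against $\sum_{j}\sum_{k}\lambda_{j\mathcal E}\lambda_{jk}[g_j(x,v_{jk})-g_j(z_\mathcal E,v_{jk})]$ (using $\sum_k\lambda_{jk}=1$) requires coefficient $1$.

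The genuine gap is in condition (i). You propose to show that each coordinate difference
$F_i^{L}(z_\mathcal E,\lambda,z_\mathcal E,\lambda_\mathcal E)-F_i^{L}(z_\mathcal E,\lambda_\mathcal E,z_\mathcal E,\lambda_\mathcal E)
=\tfrac{1}{2m}\sum_{j\in J}(\lambda_j-\lambda_{j\mathcal E})g_j(z_\mathcal E)-\tfrac{\epsilon_i^{L}}{\sqrt\theta}\|\lambda-\lambda_\mathcal E\|_1$
is nonpositive, via an estimate of the form $\tfrac{1}{2m}\sqrt\theta\le\tfrac{\epsilon_i^{L}+\epsilon_i^{U}}{\sqrt\theta}$. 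This estimate is false in general: the standing assumptions only require $0\le\epsilon_i^{L}\le\epsilon_i^{U}$ and $\sum_{i}(\epsilon_i^{L}+\epsilon_i^{U})=\theta>0$, so an individual $\epsilon_i^{L}$ may well be $0$ (every example in the paper has $\epsilon_i^{L}=0$). For such an $i$ the $L$-penalty vanishes, while $\tfrac{1}{2m}\sum_{j}(\lambda_j-\lambda_{j\mathcal E})g_j(z_\mathcal E)$ is strictly positive once you take $\lambda_j$ large for some $j\in J(\mathcal E)$; hence the coordinate-wise domination fails. (Also note that "at least one component of the difference is $\le0$" does not negate $>_{LU}$: equality in one component is compatible with $\mathcal A>_{LU}\mathcal B$ holding through another component.) The correct mechanism, which is what the paper does, is aggregation rather than coordinate-wise control: if $\mathcal{L}_{\mathcal E}(z_\mathcal E,\lambda,\cdot)>_{LU}\mathcal{L}_{\mathcal E}(z_\mathcal E,\lambda_\mathcal E,\cdot)$, sum all $2m$ componentwise inequalities (at least one strict); since $\sum_i(\epsilon_i^{L}+\epsilon_i^{U})=\theta$, the penalties add up to the full $\sqrt\theta\|\lambda-\lambda_\mathcal E\|_1$ and one obtains $\sum_{j\in J}(\lambda_j-\lambda_{j\mathcal E})g_j(z_\mathcal E)>\sqrt\theta\|\lambda-\lambda_\mathcal E\|_1$. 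This is then refuted by splitting $J$: for $j\notin J(\mathcal E)$ one has $\lambda_{j\mathcal E}=0$ and $\lambda_jg_j(z_\mathcal E)\le0$, while for $j\in J(\mathcal E)$ one has $0<g_j(z_\mathcal E)\le\sqrt\theta$, so $(\lambda_j-\lambda_{j\mathcal E})g_j(z_\mathcal E)\le\sqrt\theta|\lambda_j-\lambda_{j\mathcal E}|$, and summing gives at most $\sqrt\theta\|\lambda-\lambda_\mathcal E\|_1$. Only the sum over all coordinates is controlled; no single coordinate need be.
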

\begin{proof} Suppose  that there exists $\lambda\in \mathbb{R}^p_+$    such that
\begin{equation}
\mathcal{L}_{\mathcal{E}}(z_\mathcal{E}, \lambda, z_\mathcal{E}, \lambda_\mathcal{E})>_{LU} \mathcal{L}_{\mathcal{E}}(z_\mathcal{E}, \lambda_\mathcal{E}, z_\mathcal{E}, \lambda_\mathcal{E}).\label{equa-L-1}
\end{equation}
This means that
\begin{equation*}
F_i^L(z_\mathcal{E}, \lambda, z_\mathcal{E}, \lambda_\mathcal{E})\geq F_i^L(z_\mathcal{E}, \lambda_\mathcal{E}, z_\mathcal{E}, \lambda_\mathcal{E}) \ \ \text{and}\ \ F_i^U(z_\mathcal{E}, \lambda, z_\mathcal{E}, \lambda_\mathcal{E})\geq F_i^U(z_\mathcal{E}, \lambda_\mathcal{E}, z_\mathcal{E}, \lambda_\mathcal{E}), \ \ \forall i\in I, 
\end{equation*}
where at least one of the inequalities is strict. This means that
\begin{align*}
&\frac{1}{2m}\sum_{j\in J}\lambda_jg_j(z_\mathcal{E}) -\frac{\mathcal{E}^L_i}{\sqrt{\theta}}\|\lambda-\lambda_\mathcal{E}\|_1\geq \frac{1}{2m}\sum_{j\in J}\lambda_{j\mathcal{E}}g_j(z_\mathcal{E}),
\\
&\frac{1}{2m}\sum_{j\in J}\lambda_jg_j(z_\mathcal{E}) -\frac{\mathcal{E}^U_i}{\sqrt{\theta}}\|\lambda-\lambda_\mathcal{E}\|_1\geq \frac{1}{2m}\sum_{j\in J}\lambda_{j\mathcal{E}}g_j(z_\mathcal{E}), \ \ \forall i\in I,
\end{align*}
with at least one strict  inequality. Hence
\begin{equation*}
\sum_{j\in J}\lambda_jg_j(z_\mathcal{E})-\sqrt{\theta}\|\lambda-\lambda_\mathcal{E}\|_1>\sum_{j\in J}\lambda_{j\mathcal{E}}g_j(z_\mathcal{E}),
\end{equation*}
or, equivalently,
\begin{equation}\label{equa-L-3}
\sum_{j\in J}(\lambda_j-\lambda_{j\mathcal{E}})g_j(z_\mathcal{E})>\sqrt{\theta}\|\lambda-\lambda_\mathcal{E}\|_1.
\end{equation}
By definition of $J(\mathcal{E})$, we see that $\lambda_{j\mathcal{E}}=0$ and $\lambda_j g_j(z_\mathcal{E})\leq 0$ for all $j\notin J(\mathcal{E})$. Hence
\begin{align*}
\sum_{j\in J}(\lambda_j-\lambda_{j\mathcal{E}})g_j(z_\mathcal{E})&\leq \sum_{j\in J(\mathcal{E})}(\lambda_j-\lambda_{j\mathcal{E}})g_j(z_\mathcal{E})
\\
&\leq \sum_{j\in J(\mathcal{E})}|\lambda_j-\lambda_{j\mathcal{E}}|\sqrt{\theta}\leq \sqrt{\theta}\|\lambda-\lambda_\mathcal{E}\|_1,
\end{align*}
contrary to \eqref{equa-L-3}. This means that there is no $\lambda\in \mathbb{R}^p_+$ satisfying \eqref{equa-L-1}.

We now assume that there exists $x\in\mathbb{R}^n$ such that
\begin{equation*}
\mathcal{L}_{\mathcal{E}}(z_\mathcal{E}, \lambda_\mathcal{E}, z_\mathcal{E}, \lambda_\mathcal{E})>_{LU} \mathcal{L}_{\mathcal{E}}(x, \lambda_\mathcal{E}, z_\mathcal{E}, \lambda_\mathcal{E}), 
\end{equation*} 
or, equivalently,
\begin{align*}
f^L_i(z_\mathcal{E})+\frac{1}{2m}\sum_{j\in J}\lambda_{j\mathcal{E}}g_j(z_\mathcal{E})&\geq f^L_i(x)+\frac{1}{2m}\sum_{j\in J}\lambda_{j\mathcal{E}}g_j(x)+\frac{\mathcal{E}^L_i}{\sqrt{\theta}}\|z_\mathcal{E}-x\|
\\
f^U_i(z_\mathcal{E})+\frac{1}{2m}\sum_{j\in J}\lambda_{j\mathcal{E}}g_j(z_\mathcal{E})&\geq f^U_i(x)+\frac{1}{2m}\sum_{j\in J}\lambda_{j\mathcal{E}}g_j(x)+\frac{\mathcal{E}^U_i}{\sqrt{\theta}}\|z_\mathcal{E}-x\|, \ \ \forall i\in I,
\end{align*}
where at least one of the inequalities is strict. This implies that
\begin{equation*}
\sum_{i\in I}[f^L_i(x)-f^L_i(z_\mathcal{E})]+\sum_{i\in I}[f^U_i(x)-f^U_i(z_\mathcal{E})]+\sum_{j\in J}\lambda_{j\mathcal{E}}[g_j(x)-g_j(z_\mathcal{E})]+\sqrt{\theta}\|z_\mathcal{E}-x\|<0.
\end{equation*}
Since  $(z_\mathcal{E},\lambda_\mathcal{E})$ is a KKT pair up to $\mathcal{E}$ of \eqref{problem-R}, there exist $z_i^{*L}\in\partial f_i^L(z_\mathcal{E})$,  $z_i^{*U}\in\partial f_i^U(z_\mathcal{E})$, $i\in I$,  $\lambda_{jk}\geq 0$, $x^*_{jk}\in\partial_x g_j(z_\mathcal{E}, v_{jk})$, $v_{jk}\in\mathcal{V}_j(z_\mathcal{E})$, $k=1, \ldots, k_j$, $k_j\in\mathbb{N}$, $\sum_{k=1}^{k_j}\lambda_{jk}=1$,    $b^*\in \mathbb{B}_{\mathbb{R}^n}$, and $\omega^* \in N(z_\mathcal{E}; S)$ such that 
\begin{equation} \label{equa-L-4}
	\sum_{i\in I}( z_i^{*L}+  z_i^{*U})+\sum_{j\in J} \sum_{k=1}^{k_j}\lambda_{j\mathcal{E}}\lambda_{jk}x^*_{jk} +\sqrt{\theta}b^*+\omega^*=0.
\end{equation}
By the generalized convexity of $(f, g)$ at $z_\mathcal{E}$, there exists $\omega\in [N(z_\mathcal{E}; S)]^\circ$ such that
\begin{equation}\label{equa-L-5} 
	\begin{split}
		&f_i^L(x) -  f_i^L(z_\mathcal{E})\geq \langle z_i^{*L}, \omega\rangle, \ \ \forall i\in I,
		\\
		&f_i^U(x) -  f_i^U(z_\mathcal{E})\geq \langle z_i^{*U}, \omega\rangle, \ \ \forall i\in I,
		\\
		&g_j(x, v_{jk})- g_j(z_\mathcal{E}, v_{jk})\geq  \langle x^*_{jk}, \omega\rangle,\ \   v_{jk}\in \mathcal{V}_j(z_\mathcal{E}), j\in J,\ \ \text{and}\ \ \\
		& \langle b^*,\omega\rangle\leq \|x-z_\mathcal{E}\|,\ \ \forall b^*\in \mathbb{B}.
	\end{split}
\end{equation}
Combining \eqref{equa-L-4} and \eqref{equa-L-5}, we get  
\begin{align*}
	0&\leq \langle-\omega^*, \omega\rangle=\sum_{i\in I}[ \langle z_i^{*L}, \omega\rangle+  \langle z_i^{*U}, \omega\rangle]+\sum_{j\in J} \sum_{k=1}^{k_j}\lambda_{j\mathcal{E}}\lambda_{jk}\langle x^*_{jk}, \omega\rangle +\sqrt{\theta}\langle b^*, \omega\rangle\notag
	\\
	&\leq \sum_{i\in I}[f_i^L(x)-f_i^L(z_\mathcal{E})]+\sum_{i\in I}[f^U_i(x)-f^U_i(z_\mathcal{E})]+\sqrt{\theta}\|x-z_\mathcal{E}\|\notag
	\\
	&\ \ \ \ \ \ \ \ \ \ \ \ \ \ \ \ \ \ \ \ \ \  \ \ \ \ \ \ \ \ \ \ \ \ +\sum_{j\in J}\sum_{k=1}^{k_j}\lambda_{j\mathcal{E}}\lambda_{jk}[g_j(x, v_{jk})- g_j(z_\mathcal{E}, v_{jk})]\notag
	\\
	&<\sum_{j\in J}\sum_{k=1}^{k_j}\lambda_{j\mathcal{E}}\lambda_{jk}[g_j(x, v_{jk})- g_j(z_\mathcal{E}, v_{jk})]-\sum_{j\in J}\lambda_{j\mathcal{E}}[g_j(x)-g_j(z_\mathcal{E})] \notag
	\\
	&= \sum_{j\in J(\mathcal{E})}\sum_{k=1}^{k_j}\lambda_{j\mathcal{E}}\lambda_{jk}[g_j(x, v_{jk})- g_j(z_\mathcal{E}, v_{jk})]-\sum_{j\in J(\mathcal{E})}\lambda_{j\mathcal{E}}[g_j(x)-g_j(z_\mathcal{E})]
	\\
	&\leq \sum_{j\in J(\mathcal{E})}\sum_{k=1}^{k_j}\lambda_{j\mathcal{E}}\lambda_{jk}[g_j(x)- g_j(z_\mathcal{E})]-\sum_{j\in J(\mathcal{E})}\lambda_{j\mathcal{E}}[g_j(x)-g_j(z_\mathcal{E})]=0,
\end{align*}
a contradiction. The proof is complete.
\end{proof}
The following example shows that the generalized convexity of $(f,g)$
on $S$ used in Theorem~\ref{theo5.1} cannot be omitted.
\begin{example}\rm  Let $f=(f_1,f_2)$  be defined by
		$f_1(x)=[2x,2x+1]$,
		$f_2(x)=[-x^2,-x^2+1], $ $S=\{x\in\mathbb{R}:x\ge -3\}$, and $g(x,v)=-x-v+1, v\in[0,1]$.
		Put $\mathcal{E}_1=[0,\frac{1}{2}], \mathcal{E}_2=[0,\frac{1}{2}]$. 
       It is not difficult to show that $(f, g)$ is not generalized convex on $S$ at $z =0$.        
      From the data, we get $g(x)=-x+1\le 0$ and $  \Omega=\{x\in S:x\ge 1\}.$

      Consider at the point $\left( {x_{\mathcal{E}}} ,{\lambda_{\mathcal{E}}}\right)=\left( 0,4 \right)$. By the direct computation, we have:
    \begin{align*}
			&\partial f_1^L(x_{\mathcal{E}})=\{2\}, \ \partial f_1^U(x_{\mathcal{E}})=\{2\},\\
			&\partial f_2^L(x_{\mathcal{E}})=\{0\},\  \partial f_2^U(x_{\mathcal{E}})=\{0\},\\
			&\partial g(z)=\{-1\}, N(x_{\mathcal{E}}; S)=\{0\},
		\end{align*} 
and it is easy to see that $\left( {x_{\mathcal{E}}} ,{\lambda_{\mathcal{E}}}\right)=\left( 0,4 \right)$   is a KKT pair up to $\mathcal{E}$ of~\eqref{problem-R}.

      We consider an $\mathcal{E}$-interval-valued vector Lagrangian by setting
	\begin{equation*}
		\mathcal{L}_{\mathcal{E}}(x, \lambda, y, \mu):=\left([F_1^L(x, \lambda, y, \mu), F_1^U(x, \lambda, y, \mu)], [F_2^L(x, \lambda, y, \mu), F_2^U(x, \lambda, y, \mu)]\right),
	\end{equation*}
	\begin{align*}
		&F_1^L(x, \lambda, y, \mu):=2x+\frac{1}{4}\lambda(-x+1)+\frac{0}{\sqrt{\theta}}\|x-y\|-\frac{0}{\sqrt{\theta}}\|\lambda-\mu\|_1,
		\\
		&F_2^L(x, \lambda, y, \mu):=-x^2+\frac{1}{4}\lambda(-x+1)+\frac{0}{\sqrt{\theta}}\|x-y\|-\frac{0}{\sqrt{\theta}}\|\lambda-\mu\|_1,
		\\&F_1^U(x, \lambda, y, \mu):=2x+1+\frac{1}{4}\lambda(-x+1)+\frac{1}{2}\|x-y\|-\frac{1}{2}\|\lambda-\mu\|_1, \\
	&F_2^U(x, \lambda, y, \mu):=-x^2+1+\frac{1}{4}\lambda(-x+1)+\frac{1}{2}\|x-y\|-\frac{1}{2}\|\lambda-\mu\|_1, 
    \end{align*}
	where $(y,\mu)\in \mathbb{R}\times\mathbb{R}$ and 
	$\|\lambda-\mu\|_1:=|\lambda_j-\mu_j|.$ At the point  $\left( {x_{\mathcal{E}}} ,{\lambda_{\mathcal{E}}}\right)=\left( 0,4 \right)$, one has $$\mathcal{L}_{\mathcal{E}}(0,4,0,4)=([1,2],[1,2]).$$ We now consider  the point $\left( x,\bar{\lambda} \right)=\left( -2,4 \right)$, then
     $$\mathcal{L}_{\mathcal{E}}(-2,4,0,4)=([-1,1],[-1,1]).$$
  So, there exists $x\in\mathbb{R}^n$ such that
		\begin{equation*}
			\mathcal{L}_{\mathcal{E}}(\bar{x},\bar{\lambda}, \bar{x},\bar{\lambda})>_{LU} \mathcal{L}_{\mathcal{E}}(x, \bar{\lambda}, \bar{x},\bar{\lambda}).
		\end{equation*}  
 In the other words,   $(\bar{x},\bar{\lambda})$  is not a quasi-$\mathcal{E}$ Pareto saddle point of $\mathcal{L}_{\mathcal{E}}$.
\end{example}
We close this section by presenting converse-like duality relations for the quasi-$\mathcal{E}$ Pareto saddle point of $\mathcal{L}_{\mathcal{E}}$ and almost $\mathcal{E}$-quasi Pareto solution of \eqref{problem-R}.
\begin{theorem} If $(z_\mathcal{E},\lambda_\mathcal{E})$ is a quasi-$\mathcal{E}$ Pareto saddle point of $\mathcal{L}_{\mathcal{E}}$ and $g_j(x)\leq g_j(z_\mathcal{E})$ for all $x\in S$ and $j\in J$, then $z_\mathcal{E}$ is an almost $\mathcal{E}$-quasi Pareto solution of \eqref{problem-R}.
\end{theorem}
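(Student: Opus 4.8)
The plan is a proof by contradiction modeled on the converse-type arguments above, namely Theorem~\ref{CE-dual} and the second half of the proof of Theorem~\ref{theo5.1}, with the saddle-point inequality~(ii) playing the decisive role. To see that $z_\mathcal{E}$ is an almost $\mathcal{E}$-quasi Pareto solution of~\eqref{problem-R} two things must be checked: that $z_\mathcal{E}\in\Omega_{\mathcal{E}}$, and that there is no $x\in\Omega$ satisfying~\eqref{equa-5-n} with $\bar x=z_\mathcal{E}$. The membership $z_\mathcal{E}\in\Omega_{\mathcal{E}}$ comes with the standing meaning of $(z_\mathcal{E},\lambda_\mathcal{E})$ (the point produced by a KKT pair up to $\mathcal{E}$, cf.\ Theorem~\ref{Existence-Theorem}); if one prefers, the inequalities $g_j(z_\mathcal{E})\le\sqrt{\theta}$ can also be monitored through saddle-point condition~(i), by feeding it with one-coordinate perturbations of $\lambda_\mathcal{E}$ and summing the resulting $2m$ component inequalities as in the first half of the proof of Theorem~\ref{theo5.1}.

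For the substantive part, assume to the contrary that some $x\in\Omega$ obeys~\eqref{equa-5-n} with $\bar x=z_\mathcal{E}$. Unfolding the interval relations via Definition~\ref{dn2.3}, this reads
\[
f^L_i(x)\le f^L_i(z_\mathcal{E})-\tfrac{\epsilon^U_i}{\sqrt{\theta}}\|x-z_\mathcal{E}\|\quad\text{and}\quad f^U_i(x)\le f^U_i(z_\mathcal{E})-\tfrac{\epsilon^L_i}{\sqrt{\theta}}\|x-z_\mathcal{E}\|,\qquad\forall i\in I,
\]
with at least one of the $2m$ inequalities strict. Because $x\in\Omega\subset S$, the hypothesis $g_j(x)\le g_j(z_\mathcal{E})$ together with $\lambda_{j\mathcal{E}}\ge0$ gives $\tfrac{1}{2m}\sum_{j\in J}\lambda_{j\mathcal{E}}g_j(x)\le\tfrac{1}{2m}\sum_{j\in J}\lambda_{j\mathcal{E}}g_j(z_\mathcal{E})$. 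I then add this to the two families of objective inequalities and use $0\le\epsilon^L_i\le\epsilon^U_i$ and $\|x-z_\mathcal{E}\|\ge0$ to dominate the perturbation terms carried by $F^L_i$ and $F^U_i$, obtaining
\[
F^L_i(z_\mathcal{E},\lambda_\mathcal{E},z_\mathcal{E},\lambda_\mathcal{E})\ge F^L_i(x,\lambda_\mathcal{E},z_\mathcal{E},\lambda_\mathcal{E})\quad\text{and}\quad F^U_i(z_\mathcal{E},\lambda_\mathcal{E},z_\mathcal{E},\lambda_\mathcal{E})\ge F^U_i(x,\lambda_\mathcal{E},z_\mathcal{E},\lambda_\mathcal{E}),\qquad\forall i\in I,
\]
one of these being strict (inherited from the strict component of~\eqref{equa-5-n}). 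In interval-vector notation this is exactly $\mathcal{L}_{\mathcal{E}}(z_\mathcal{E},\lambda_\mathcal{E},z_\mathcal{E},\lambda_\mathcal{E})>_{LU}\mathcal{L}_{\mathcal{E}}(x,\lambda_\mathcal{E},z_\mathcal{E},\lambda_\mathcal{E})$, which contradicts saddle-point condition~(ii) read at the test point $x\in\mathbb{R}^n$. Hence no such $x$ exists, i.e.\ $z_\mathcal{E}\in\mathcal{E}$-$\mathcal{S}_{a}^{q}\eqref{problem-R}$.

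The step I expect to demand real care is the endpoint bookkeeping in the last implication: the right-hand side of~\eqref{equa-5-n} is the interval difference $f_i(z_\mathcal{E})-\tfrac{1}{\sqrt{\theta}}\|x-z_\mathcal{E}\|\,\mathcal{E}_i$, so $\epsilon^U_i$ is what gets subtracted from $f^L_i$ while $\epsilon^L_i$ is what gets subtracted from $f^U_i$, whereas in the Lagrangian $F^L_i$ carries $\epsilon^L_i$ and $F^U_i$ carries $\epsilon^U_i$; the inequalities $0\le\epsilon^L_i\le\epsilon^U_i$ and $\|x-z_\mathcal{E}\|\ge0$ therefore have to be invoked in precisely the right places for each of the $2m$ scalar estimates, and one must keep track that the lone strict inequality is not lost. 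Apart from this bookkeeping, everything is routine scalar manipulation entirely parallel to the proofs of Theorems~\ref{Sufficient-Theorem-II},~\ref{CE-dual}, and~\ref{theo5.1}.
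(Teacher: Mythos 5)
Your overall architecture is the same as the paper's: you obtain $z_\mathcal{E}\in\Omega_{\mathcal{E}}$ from saddle condition (i) via one-coordinate perturbations of $\lambda_\mathcal{E}$, and you try to contradict saddle condition (ii) at a hypothetical quasi-dominator $x\in\Omega$ using $g_j(x)\le g_j(z_\mathcal{E})$. However, the step you yourself flag as delicate is exactly where the argument breaks, and your proposed resolution does not work. Unfolding \eqref{equa-5-n} as in \eqref{equ:8-n} gives, for the upper endpoints, $f^U_i(x)+\tfrac{\epsilon^L_i}{\sqrt{\theta}}\|x-z_\mathcal{E}\|\le f^U_i(z_\mathcal{E})$; but to conclude $F^U_i(z_\mathcal{E},\lambda_\mathcal{E},z_\mathcal{E},\lambda_\mathcal{E})\ge F^U_i(x,\lambda_\mathcal{E},z_\mathcal{E},\lambda_\mathcal{E})$ you need $f^U_i(x)+\tfrac{\epsilon^U_i}{\sqrt{\theta}}\|x-z_\mathcal{E}\|\le f^U_i(z_\mathcal{E})$ (up to the nonpositive $g$-slack), because $F^U_i$ carries the \emph{upper} tolerance $\epsilon^U_i$ on the $\|x-y\|$ term. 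Since $\epsilon^U_i\ge\epsilon^L_i$, the inequality you have is \emph{weaker} than the one you need, by the nonnegative amount $\tfrac{\epsilon^U_i-\epsilon^L_i}{\sqrt{\theta}}\|x-z_\mathcal{E}\|$; so the claimed domination fails whenever $x\ne z_\mathcal{E}$ and $\epsilon^U_i>\epsilon^L_i$. The $L$-components do go through (there the crossover supplies extra slack in your favor), but $\mathcal{L}_{\mathcal{E}}(z_\mathcal{E},\lambda_\mathcal{E},z_\mathcal{E},\lambda_\mathcal{E})>_{LU}\mathcal{L}_{\mathcal{E}}(x,\lambda_\mathcal{E},z_\mathcal{E},\lambda_\mathcal{E})$ requires all $2m$ scalar inequalities, so the contradiction with (ii) cannot be assembled. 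Summing over $i$ does not rescue the argument either, since (ii) is a Pareto-type, not a scalar, condition.

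To be fair, the paper's own proof commits the same endpoint swap silently: its final display asserts nonexistence of $x\in\Omega$ with $\epsilon^L_i$ paired against $f^L_i$ and $\epsilon^U_i$ against $f^U_i$, which is not the negation of \eqref{equa-5-n} under the interval arithmetic and Definition~\ref{dn2.3} the paper itself uses in \eqref{equ:8-n}. So you have inherited a defect rather than introduced one; but your explicit claim that $0\le\epsilon^L_i\le\epsilon^U_i$ lets you ``dominate the perturbation terms carried by $F^L_i$ \emph{and} $F^U_i$'' is false for the $U$-half. The statement as given is only provable in this way when $\epsilon^L_i=\epsilon^U_i$ for all $i$, or if $\mathcal{L}_{\mathcal{E}}$ is redefined with crossed endpoints ($F^L_i$ carrying $\epsilon^U_i$ and $F^U_i$ carrying $\epsilon^L_i$ in the $\|x-y\|$ term), in which case both component families line up correctly.
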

\begin{proof}
Since $(z_\mathcal{E},\lambda_\mathcal{E})$ is a quasi-$\mathcal{E}$ Pareto saddle point of $\mathcal{L}_{\mathcal{E}}$, there is no $\lambda\in \mathbb{R}^p_+$ such~that
		\begin{equation}\label{eq5.1}
			\sum_{j\in J}\lambda_jg_j(z_\mathcal{E})-\sqrt{\theta}\|\lambda-\lambda_\mathcal{E}\|_1>\sum_{j\in J}\lambda_{j\mathcal{E}}g_j(z_\mathcal{E}),
		\end{equation}
		or, equivalently,
		\begin{equation*}
			\sum_{j\in J}(\lambda_j-\lambda_{j\mathcal{E}})g_j(z_\mathcal{E})>\sqrt{\theta}\|\lambda-\lambda_\mathcal{E}\|_1.
		\end{equation*}
		If $z_\mathcal{E}\notin \Omega_\mathcal{E}$, then $g_k(z_\mathcal{E})>0$ for some $k\in J$. So, we get
        \begin{align*}
&g_k(z_\mathcal{E})+\lambda_{k\mathcal{E}}g_k(z_\mathcal{E})>\theta+\lambda_{k\mathcal{E}}g_k(z_\mathcal{E})\\
&\sum_{k\neq j}\lambda_{j\mathcal{E}}g_j(z_\mathcal{E})+(1+\lambda_{k\mathcal{E}})g_jz_\mathcal{E}> \theta+ \sum_{j\in J}\lambda_{j\mathcal{E}}g_j(z_\mathcal{E}).
		\end{align*}
        Choosing   $\bar{\lambda_k}=1+\lambda_{k\mathcal{E}} $ and $\bar{\lambda_j}=\lambda_{j\mathcal{E}},$
 for all $j\not= k$, one obtains       \begin{equation*}
			\sum_{j\in J}\bar{\lambda_j}g_j(z_\mathcal{E})-\theta\|\bar{\lambda}-\lambda_\mathcal{E}\|_1>\sum_{j\in J}\lambda_{j\mathcal{E}}g_j(z_\mathcal{E}),
		\end{equation*}
        which contradicts~\eqref{eq5.1}. Therefore, we conclude that $z_\mathcal{E}\in \Omega_\mathcal{E}$.
        
		Now, we use the other inequality for a quasi-$\mathcal{E}$-Pareto-saddle point. Since $(z_\mathcal{E},\lambda_\mathcal{E})$ is a quasi-$\mathcal{E}$ Pareto saddle point of $\mathcal{L}_{\mathcal{E}}$ then there is not  $x\in\mathbb{R}^n$ such that
		\begin{equation*}
			\mathcal{L}_{\mathcal{E}}(z_\mathcal{E}, \lambda_\mathcal{E}, z_\mathcal{E}, \lambda_\mathcal{E})>_{LU} \mathcal{L}_{\mathcal{E}}(x, \lambda_\mathcal{E}, z_\mathcal{E}, \lambda_\mathcal{E}),\label{equa-L-2}
		\end{equation*} 
		or, equivalently,
		\begin{align*}
			f^L_i(z_\mathcal{E})+\frac{1}{2m}\sum_{j\in J}\lambda_{j\mathcal{E}}g_j(z_\mathcal{E})&\geq f^L_i(x)+\frac{1}{2m}\sum_{j\in J}\lambda_{j\mathcal{E}}g_j(x)+\frac{\mathcal{E}^L_i}{\sqrt{\theta}}\|z_\mathcal{E}-x\|
			\\
			f^U_i(z_\mathcal{E})+\frac{1}{2m}\sum_{j\in J}\lambda_{j\mathcal{E}}g_j(z_\mathcal{E})&\geq f^U_i(x)+\frac{1}{2m}\sum_{j\in J}\lambda_{j\mathcal{E}}g_j(x)+\frac{\mathcal{E}^U_i}{\sqrt{\theta}}\|z_\mathcal{E}-x\|, \ \ \forall i\in I,
		\end{align*}
		where at least one of the inequalities is strict.
		However, $g_j(x)\leq g_j(z_\mathcal{E})$, for $j\in J$ and $x\in \Omega$. 
        Thus, we conclude that there is no $x\in \Omega$ such that
		\begin{align*}
			f^L_i(z_\mathcal{E})&\geq f^L_i(x)+\frac{\mathcal{E}^L_i}{\sqrt{\theta}}\|z_\mathcal{E}-x\|
			\\
			f^U_i(z_\mathcal{E})&\geq f^U_i(x)+\frac{\mathcal{E}^U_i}{\sqrt{\theta}}\|z_\mathcal{E}-x\|, \ \ \forall i\in I,
		\end{align*}
        with at least one strict inequality. In the other words,
    $z_\mathcal{E}$ is an almost $\mathcal{E}$-quasi Pareto solution of \eqref{problem-R}. The proof is complete.
\end{proof}
%===============================================================================

%Conclusions ==============================================================
%===============================================================================
\section{Conclusions}\label{Conclusions}
 In this paper, we focus on studying approximate Pareto solutions of nonsmooth interval-valued multiobjective optimization problems with data uncertainty in constraints.  On the one hand, we introduce some  types of approximate  Pareto solutions for the robust counterpart \eqref{problem-R} of the problem in question. By using a scalar penalty function, we obtain a result on the existence of an almost regular $\mathcal{E}$-Pareto solution of \eqref{problem-R} that satisfies the KKT necessary optimality condition up to a given precision. We then present sufficient optimality conditions under the assumptions of generalized convexity.  On the other hand, we suggest establishing Wolfe-type $\mathcal{E}$-duality theorems for  approximate solutions of the considered problem. We also consider the $\mathcal{E}$-interval-valued vector Lagrangian function and the quasi-$\mathcal{E}$ Pareto saddle point.

\section*{Disclosure statement} 
The authors declare that they have no conflict of interest.

\section*{Data availability} There is no data included in this paper.

%\section*{Funding}{}
%This research is funded by Hanoi Pedagogical University 2.

%%%%%%%%%%%%%%%%%%%%%%%%%%%%%%%%%%%%%%%%%%%%%%%%%%%%%%%%%%%%%%%%%%%%%%%%%%%%%%%%

\end{document}